\tikzset{
>=stealth',
help lines/.style={dashed, thick},
axis/.style={<->},
important line/.style={thick},
connection/.style={thick, dotted},
}
\theoremstyle{definition}
\newtheorem{theorem}{Theorem}[section]
\newtheorem{lemma}[theorem]{Lemma}
\newtheorem{corollary}[theorem]{Corollary}
\newtheorem{prop}[theorem]{Proposition}
\theoremstyle{definition}
\newtheorem{definition}[theorem]{Definition}
\newtheorem{example}[theorem]{Example}
\newtheorem{remark}[theorem]{Remark}
\newtheorem{thm}{Theorem}
\newenvironment{thmbis}[1]
  {
   \addtocounter{thm}{0}%
   \begin{thm}}
  {\end{thm}}
 \DeclareMathOperator{\gr}{gr}
\DeclareMathOperator{\Hom}{{Hom}} 
\DeclareMathOperator{\Ext}{{Ext}}
 \DeclareMathOperator{\End}{End}
\DeclareMathOperator{\Coh}{Coh}
\DeclareMathOperator{\Mod}{Mod\hbox{-}}
\DeclareMathOperator{\Gm}{\mathbb{G}_m}
\DeclareMathOperator{\Gr}{Gr}
\newcommand{\inj}{\hookrightarrow}
\def\angl#1{{\{ #1\}}}
\newcommand{\fs}{\mathfrak{s}}
\newcommand{\fl}{\mathfrak{l}}
\newcommand{\fg}{\mathfrak{g}}
\newcommand{\calC}{\mathcal{C}}
\newcommand{\bbC}{\mathbb{C}}
\newcommand{\bbG}{\mathbb{C}}
\newcommand{\calD}{\mathbb{D}}
\newcommand{\calA}{\mathcal{A}}
\newcommand{\bbQ}{\mathbb{Q}}
\newcommand{\calM}{\mathcal{M}}
\newcommand{\calP}{\mathcal{P}}
\newcommand{\calQ}{\mathcal{Q}}
\newcommand{\fh}{\mathfrak{h}}
\newcommand{\calO}{\mathcal{O}}
\newcommand{\calE}{\mathcal{E}}
\newcommand{\calF}{\mathcal{F}}
\newcommand{\fE}{\mathfrak{E}}
\newcommand{\fF}{\mathfrak{F}}
\newcommand{\bbZ}{\mathbb{Z}}
\newcommand{\calV}{\mathcal{V}}
\newcommand{\bbP}{\mathbb{P}}
\begin{document}

\title{Categorification via blocks of modular representations for $\mathfrak{sl}_n$}
\author{Vinoth Nandakumar}
\address{School of Mathematics and Statistics, University of Sydney, NSW 2006 Australia}
\email{vinoth.nandakumar@sydney.edu.au (or vinoth.90@gmail.com)}

\author{Gufang Zhao}
\address{Department of Mathematics and Statistics, University of Massachusetts,  Amherst, MA 01003 U.S.A.}
\curraddr{The University of Melbourne,
School of Mathematics and Statistics,
Parkville VIC 3010,
Australia}
\email{gufangz@unimelb.edu.au}
\dedicatory{Dedicated to our friend, Dmitry Vaintrob}
\subjclass[2010]{22E47, 14M15, 14L35}
\keywords{Categorification, modular representations, Fourier-Mukai transform, localization}

\begin{abstract} 
Bernstein, Frenkel, and Khovanov have constructed a categorification of tensor products of the standard representation of $\mathfrak{sl}_2$, where they use singular blocks of category $\mathcal{O}$ for $\mathfrak{sl}_n$ and translation functors. Here we construct a positive characteristic analogue using blocks of representations of $\fs\fl_n$ over a field $\textbf{k}$ of characteristic $p$ with zero Frobenius character, and singular Harish-Chandra character. We show that the aforementioned categorification admits a Koszul graded lift, which is equivalent to a geometric categorification constructed by Cautis, Kamnitzer, and Licata using coherent sheaves on cotangent bundles to Grassmanians. In particular, the latter admits an abelian refinement. With respect to this abelian refinement, the stratified Mukai flop induces a perverse equivalence on the derived categories for complementary Grassmanians. This is part of a larger project to give a combinatorial approach to Lusztig's conjectures for representations of Lie algebras in positive characteristic. 
\end{abstract}

\maketitle

\section{Introduction}
In \cite{bfk}, Bernstein, Frenkel and Khovanov categorify the action of $\fs\fl_2$ on the tensor product $(\mathbb{C}^2)^{\otimes n}$ using singular blocks of category $\mathcal{O}$ for $\mathfrak{sl}_n$. In \cite{FKK}, Frenkel, Kirillov and Khovanov show that the classes of the simple objects in these representation categories match up with the dual canonical basis in $(\mathbb{C}^2)^{\otimes n}$, specialized at $q=1$. These results can be used to give a combinatorial approach to the Kazhdan-Lusztig Conjectures in type A, and categorical techniques have since been used widely in representation theory. In the present paper, we extend this approach to representation categories of Lie algebras in positive characteristic. We show that the resulting categorification can be equipped with a Koszul grading, using the theory of geometric categorical actions developed by Cautis, Kamnitzer, and Licata and geometric localization theory developed by Bezrukavnikov, Mirkovi\'c, and Rumynin. In \S~\ref{further}, we discuss some open questions and give a summary of the sequel \cite{nzh2} based on the techniques developed in the present paper. 

Categorification refers to the idea of lifting algebraic, and representation theoretic, structures and maps to the categorical level. In particular, given a linear map between two vector spaces, the vector spaces are lifted to categories, and the linear map is lifted to a functor. In many cases, including the one discussed in this paper, and the one studied by Bernstein, Frenkel, Kirillov and Khovanov  \cite{bfk}, the categories in question themselves arise in representation theoretic contexts, and interesting properties of these representation categories can be deduced using the general framework of categorification. One of the first examples was given by Ariki \cite{ariki} and Grojnowski \cite{groj}, which says the highest weight modules of $\widehat{\mathfrak{sl}_n}$ can be categorified using suitable representation blocks of affine Hecke algebras and their quotients. Chuang and Rouquier \cite{cr} use the categorical framework that they developed to prove Broue's abelian defect conjecture. Categorical techniques also play an important role in Elias-Williamson's proof of Kazhdan-Lusztig conjectures for category $\mathcal{O}$. Recently categorification has been applied to representation theory in positive characteristic; in particular, see the work of Riche and Williamson \cite{ew} on $p$-canonical bases for algebraic groups, and work of the first author in collaboration with Rina Anno \cite{an} and David Yang \cite{ny} on representations of $\mathfrak{sl}_n$ with two-row nilpotent Frobenius characters. 

We are interested in $\fs\fl_2$-categorifications. An $\mathfrak{sl}_2$-representation on a finite-dimensional complex vector space $V$ consists of a weight space decomposition $V = \bigoplus_{r \in \mathbb{Z}} V_r$, linear maps $E_{r+1}: V_r \rightarrow V_{r+2}$ and $F_{r+1}: V_{r+2} \rightarrow V_r$, such that $$E_{r-1} F_{r-1} - F_{r+1} E_{r+1} = r \cdot \text{Id}$$ Loosely speaking, when we \textit{categorify} the representation $V$, we replace each weight space $V_r$ by a category $\mathcal{C}_r$ such that $K^0(\mathcal{C}_r) \simeq V_r$; and replace the maps $E_{r+1}$ and $F_{r+1}$ by functors $\mathcal{E}_{r+1}: \mathcal{C}_r \rightarrow \mathcal{C}_{r+2}$ and $\mathcal{F}_{r+1}: \mathcal{C}_{r+2} \rightarrow \mathcal{C}_{r}$ which satisfy the categorical $\mathfrak{sl}_2$ relation (as spelled out in (\ref{catrel}) in Section \ref{subsec:sl2}). We will also need Chuang-Rouquier's notion of an $\fs\fl_2$-categorification, which consists of some extra data: endomorphisms $X \in \text{End}(\bigoplus \mathcal{E}_r), T \in \text{End}(\bigoplus \mathcal{E}_{r+2} \circ \mathcal{E}_r)$ satisfying certain compatibilities (see Section~\ref{subsec:sl2}).

One of the first interesting examples of this was given by Bernstein, Frenkel, and Khovanov \cite{bfk} (and motivated by the geometric constructions from Beilinson, Lusztig, MacPherson \cite{blm}, and Grojnowski \cite{groj}). The $\mathfrak{sl}_2$-representation in question is $V = (\mathbb{C}^2)^{\otimes n}$, which has a weight space decomposition $V = \bigoplus_{0 \leq i \leq n} V_{-n+2i}$ with $\text{dim}(V_{-n+2r}) = \binom{n}{r}$. The category $\mathcal{C}_{-n+2r}$ is taken to be the singular block of category $\mathcal{O}$ for $\mathfrak{sl}_n$ with Harish-Chandra character $\mu_r = - \rho + e_1 + \cdots + e_r$ (here $\rho$ is the half-sum of all positive roots, and $e_1, \cdots, e_n$ are coordinates on the Cartan matrix). $\mathcal{E}_{-n+2r+1}$ (resp. $\mathcal{F}_{-n+2r+1}$) are given by translation functors between these blocks, and are given by tensoring with $\mathbb{C}^n$ (resp. $(\mathbb{C}^n)^*$) followed by projection. A basis for $K^0(\mathcal{C}_{-n+2r})$ is then given by the classes of the Verma modules, and this basis is identified with the standard basis of the weight space $V_{-n+2r}$. Chuang and Rouquier \cite{cr} verify that this is a $\fs\fl_2$-categorification in their sense.

In this paper, we first construct a modular analogue of this result, using blocks of representations of the Lie algebra $\fg:=\fs\fl_n$ defined over an algebraically closed field $\textbf{k}$ of characteristic $p > n$; see Sections~\ref{modularrep} and \ref{subsec:const_state} for more details. We will be categorifying the same representation $V=(\mathbb{C}^2)^{\otimes n}$, and will take $\mathcal{C}_{-n+2r}$ to be the category $\text{Mod}_{0, \mu_r}(\textbf{U}\fg)$ of finitely generated $\textbf{U}\fg$-modules, on which the Harish-Chandra center acts via the same generalized central character $\mu_r$, and the Frobenius center acts by central character $0$. As before, these central characters are carefully chosen so that the rank of the Grothendieck group is equal to $\binom{n}{r}$. Again, the functors $\mathcal{E}_{-n+2r+1}$ and $\mathcal{F}_{-n+2r+1}$ are translation functors between the corresponding blocks, and are given by tensoring with $\textbf{k}^n$ (resp. $(\textbf{k}^n)^*$) followed by projection. 

Here we emphasize that although the $\mathfrak{sl}_2$-representation being categorified is defined over $\mathbb{C}$, the categories used in this construction are representation categories of $\mathfrak{sl}_n$ over a field of positive characteristic. This is closely related to the categorification constructed by Chuang and Rouquier \cite[\S~7.5]{cr} using representations of $\textbf{SL}_n(\textbf{k})$ (see Remark~\ref{cr7.5}). A more precise statement is given in Section~\ref{subsec:const_state}. 

\begin{thm}\label{ThmIntA} 
Consider the categories $\calC_{-n+2r}=\text{Mod}_{0,\mu_r}(\textbf{U}\fg)$, for $0 \leq r \leq n$, and the translation functors $\mathcal{E}_{-n+2r+1}$, $\mathcal{F}_{-n+2r+1}$. There exist (explicitly constructed) functors and natural transformations,  which satisfy the conditions of a $\fs\fl_2$-categorification in the sense of Chuang and Rouquier. 
\end{thm}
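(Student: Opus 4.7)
The plan is to follow the strategy of Bernstein-Frenkel-Khovanov \cite{bfk}, as reformulated by Chuang-Rouquier \cite{cr}, adapted to positive characteristic by exploiting the hypothesis $p > n$. The singular Harish-Chandra characters $\mu_r = -\rho + e_1 + \cdots + e_r$ have stabilizer isomorphic to the parabolic $S_r \times S_{n-r} \subset S_n$, ensuring that the blocks $\calC_{-n+2r}$ have Grothendieck groups of rank $\binom{n}{r}$, matching the weight space $V_{-n+2r}$ of $(\bbC^2)^{\otimes n}$. Biadjointness of $(\mathcal{E}_{-n+2r+1}, \mathcal{F}_{-n+2r+1})$ is immediate: since $\mathbf{k}^n$ and $(\mathbf{k}^n)^*$ are dual $\fg$-representations, the functors $(-)\otimes\mathbf{k}^n$ and $(-)\otimes(\mathbf{k}^n)^*$ form a biadjoint pair on $\textbf{U}\fg$-Mod, and this biadjointness descends to the projections onto individual blocks.

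I would then define the natural transformations $X$ and $T$ as follows. Let $X \in \End(\bigoplus_r \mathcal{E}_{-n+2r+1})$ be the natural transformation induced on $M \otimes \mathbf{k}^n$ by the action of $\Omega = \sum_{i,j} e_{ij} \otimes e_{ji} \in \textbf{U}\fg \otimes \textbf{U}\fg$; since $\Omega$ commutes with the coproduct action of $\textbf{U}\fg$, this gives a well-defined endotransformation on each block. Let $T \in \End(\bigoplus_r \mathcal{E}_{-n+2r+3} \circ \mathcal{E}_{-n+2r+1})$ be induced by the natural flip automorphism of $\mathbf{k}^n \otimes \mathbf{k}^n$, which is $\fg$-equivariant.

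The main verification then has two parts. First, one must show that the functors $(\mathcal{E}, \mathcal{F})$ lift the $\fs\fl_2$-relation, that is, establish a direct sum decomposition $\mathcal{E}\mathcal{F} \cong \mathcal{F}\mathcal{E} \oplus \text{Id}^{\oplus r}$ on $\calC_{-n+2r}$ for $r \geq 0$ (and the dual statement for $r \leq 0$). This reduces to decomposing the composite functor given by tensoring with $\mathbf{k}^n \otimes (\mathbf{k}^n)^*$ followed by projection onto the relevant block, analogous to \cite{bfk}. Second, one must check that $X$ and $T$ satisfy the degenerate affine Hecke algebra relations of Section~\ref{subsec:sl2}; these reduce to universal relations for $\Omega$ and the flip on $\mathbf{k}^n \otimes \mathbf{k}^n$, which are characteristic-independent and so are inherited from the characteristic-zero case already treated in \cite{cr}.

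The main obstacle is the first of these verifications, since in positive characteristic the block decomposition of $\textbf{U}\fg$-Mod is governed by both the Harish-Chandra and Frobenius centers. One must check that fixing zero Frobenius character is compatible with the direct sum decomposition valid in characteristic zero. The key inputs will be: that translation functors commute with the action of the Frobenius center, so that projections onto the zero Frobenius character blocks are preserved; that the hypothesis $p > n$ ensures the parabolic block structure at $\mu_r$ mirrors the characteristic-zero picture; and that the relevant multiplicities can be computed by evaluating the functors on baby Verma modules, whose composition series in this regime align with those of the ordinary Verma modules used in \cite{bfk}. With these in hand, the categorical relation and the Chuang-Rouquier axioms follow.
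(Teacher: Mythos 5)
Your construction of $X$ via the Casimir-type element $\Omega=\sum_{i,j}e_{ij}\otimes e_{ji}$ and of $T$ via the flip is exactly the route the paper takes (following \cite[\S 7.4]{cr}), and the biadjointness observation is fine. The genuine gap is in how you propose to verify the $\mathfrak{sl}_2$-action on Grothendieck groups (the ``weak'' part of the categorification): you want to compute the relevant multiplicities ``by evaluating the functors on baby Verma modules, whose composition series align with those of the ordinary Verma modules.'' This fails on two counts. First, in a fixed block $\text{Mod}^{\text{fg}}_{0,\mu_r}(\textbf{U}\fg)$ \emph{all} baby Verma modules have the same class in $K^0$ (see Remark~\ref{identify}), so their classes span only a rank-one subgroup of the rank-$\binom{n}{r}$ Grothendieck group; checking the identity $[e,f]=h$ on baby Vermas therefore does not verify it on $K^0(\calC_{-n+2r})$, nor does it pin down the weight-space structure. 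Second, the claim that baby Verma composition series match those of ordinary Vermas is unjustified (these multiplicities are governed by $p$-canonical/periodic Kazhdan--Lusztig data) and, fortunately, unnecessary. The paper's substitute is Lemma~\ref{span} plus Proposition~\ref{calc}: the classes of (restrictions of) Weyl modules span $K^0$ of each block, via the surjection $K^0(\text{Rep}(G))\to K^0(\text{Rep}(G_1))\simeq K^0(\text{Rep}(U_0\fg))$, and the action of $\textbf{E},\textbf{F}$ on $[V(\lambda)]$ is computed combinatorially by adding/removing boxes in marked versus unmarked rows, the needed identity reducing to the count $c_1+r=c_2+(n-r)$ via a bijection between atypical marked and atypical unmarked rows. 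Some argument of this kind, on an actual spanning set, is required and is missing from your plan.

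Two further points. Your assertion that the $X,T$ axioms are ``characteristic-independent and inherited from characteristic zero'' is too quick: the degenerate affine Hecke relations for $\Omega$ and the flip are indeed universal, but one must also show that the block summand $\textbf{E}$ of $V\otimes-$ is a generalized eigenspace of $X$ (this is what makes $X$ restrict correctly and gives local nilpotence of $X-a$), and that amounts to checking that the Casimir scalars $c_{\lambda,i}$, now taken modulo $p$, separate the $W$-orbits of the weights $\lambda+e_i$; this is a genuinely characteristic-$p$ verification, carried out in the paper under the hypothesis $p>2$. Finally, you do not need to prove the functorial decomposition $\mathcal{E}\mathcal{F}\simeq\mathcal{F}\mathcal{E}\oplus\text{Id}^{\oplus r}$ directly (which in characteristic $p$ would require a delicate analysis of $V\otimes V^*\otimes-$): in the Chuang--Rouquier framework it is a consequence of \cite[Theorem~5.27]{cr} once the weak categorification and the endomorphisms $X,T$ are in place, and this is how the paper proceeds.
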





The second main result of the present paper is on a graded lift of the categorification from Theorem~\ref{ThmIntA}, which is equivalent to one constructed by Cautis, Kamnitzer, and Licata  \cite{ckl}; see Section~\ref{statementB} for more details. When working in the graded setting, following Rouquier \cite{Rou}, the analogous notion categorifying an action of the quantum group $U_q(\mathfrak{sl}_2)$ is that of a ``strong categorical  $\mathfrak{sl}_2$-action'', recalled in Section~\ref{subsec:sl2} (see  also \cite[\S~2.1]{ckl}). In the construction  of Cautis, Kamnitzer, and Licata \cite{ckl}, the category $\calC_{-n+2r}$ is taken to be the derived category of $\Gm$-equivariant coherent sheaves on $T^* \Gr(r,n)$ (here $\Gr(r,n)$ is the Grassmannian of $r$-dimensional vector spaces in $\textbf{k}^n$, and $\Gm$ is the multiplicative group). The functors that are denoted by $\textbf{E}(-n+2r+1)$ and $\textbf{F}(-n+2r+1)$ are given by certain pull-push maps using an intermediary space. They show that this fits into their framework of ``geometric categorical $\mathfrak{sl}_2$-actions'', and consequently deduce that this is a strong $\mathfrak{sl}_2$-categorification.  Upon taking Grothendieck groups, one obtains the $U_q(\mathfrak{sl}_2)$-representation $V^{\otimes n}$, where $V$ is the standard representation of $U_q(\mathfrak{sl}_2)$.

The graded lift of the modular representation categories in question, denoted by $\text{Mod}^{\text{fg, gr}}_{0,\mu_r} (\textbf{U}\fg)$, is called the Koszul grading,  constructed by Riche  \cite{riche}, using a localization equivalence that builds upon the framework developed by Bezrukavnikov, Mirkovi\'c, and Rumynin in \cite{bmr}. The localization equivalence yields that: $$ D^b \text{Coh}_{\Gm}(T^* \Gr(r,n)) \simeq D^b \text{Mod}^{\text{fg, gr}}_{0, \mu_r}(\textbf{U} \mathfrak{g}) .$$ 
The graded lift of the categorification from Theorem~\ref{ThmIntA} is obtained from 
the  categorification  of  Cautis, Kamnitzer, and Licata \cite{ckl} via twisting by certain line bundles, followed by applying the above equivalences to it. See Section~\ref{statementB} for a more precise version of the below. 

\begin{thm}\label{IntrB}
On the categories $\calD_{-n+2r}=\text{Mod}^{\text{fg,gr}}_{0,\mu_r} (\textbf{U}\fg)$ there are functors $E_{-n+2r+1}, F_{-n+2r+1}$ and an octuple of natural transforms making it 
 a strong $\mathfrak{sl}_2$-categorification.  Moreover, this categorification is equivalent to the categorification constructed by Cautis, Kamnitzer, and Licata \cite{ckl}. The forgetful functor $F:\text{Mod}^{\text{fg,gr}}_{0,\mu_r} (\textbf{U}\fg)\to \text{Mod}^{\text{fg}}_{0,\mu_r} (\textbf{U}\fg)$ commutes with this categorification and the one from Theorem~\ref{ThmIntA}. 
\end{thm} 

\noindent{\bf Corollary.}
The geometric categorification constructed by Cautis, Kamnitzer, and Licata \cite{ckl} admits an abelian refinement.
\bigskip

The construction of Cautis, Kamnitzer, and Licata  \cite{ckl} uses derived categories, and the existence of an abelian refinement is not immediate from the definition. A more general theory of abelian refinements and exotic t-structures is developed by Cautis and Koppensteiner  \cite[ Corollary 9.2]{cautis}. Our proof of Theorem \ref{IntrB} involves studying the image of the $\mathfrak{sl}_2$ functors, under the linear Koszul duality equivalence that is used  by Riche\cite{riche}. This leads us to study coherent sheaves on the Grothendieck-Springer varieties $\tilde{\mathfrak{g}}_{\mathcal{P}}$. The category of coherent sheaves studied here is closely related to Cautis and Kamnitzer's categorical loop $\mathfrak{sl}_n$ action on these categories \cite{ck16}. See Remark \ref{refine} for a more precise discussion of these connections. 

An explicit derived equivalence $D^b\Coh(T^*\Gr(k,N)) \cong D^b\Coh(T^*\Gr(N-k,N))$ is achieved as a consequence of the categorification by Cautis, Kamnitzer, and Licata \cite{ckl}, which answered an earlier open question of Kawamata and Namikawa on whether stratified Mukai flops induced derived equivalences. As an application of the above theorem, we prove that the derived equivalences induced by these stratified Mukai flops are perverse equivalences in the sense of Chuang and Rouquier \cite{CR2}. More precisely, we have the following (namely, we can describe explicitly the change of $t$-structures under this class of stratified Mukai flops). See \S~\ref{subsec:perv} and Remark~\ref{rmk:3.14} for a more detailed discussion.

\medskip
\noindent{\bf Corollary.} 
The derived equivalence $D^b\Coh(T^*\Gr(k,N)) \cong D^b\Coh(T^*\Gr(N - k,N))$ of Cautis, Kamnitzer, and Licata \cite{ckl} is a perverse equivalence, when both sides are endowed with the $t$-structures coming from the localization of Riche \cite{riche}.
\bigskip




\subsection*{Organization of this paper} 
In Section \ref{thmAproof}, we recall some background material about categorification, and about modular representations of Lie algebras. We then state in more details and prove Theorem~\ref{ThmIntA}. In Section~\ref{thmBproof}, we recall some background material about Riche's localization results; then we state in more details and prove Theorem~\ref{IntrB}. In Section \ref{further}, we discuss some open problems and further directions.  

\subsection*{Acknowledgements}
We are very much indebted to Roman Bezrukavnikov, who first suggested to us that the categorification \cite{ckl} admits an abelian refinement using exotic $t$-structures and linear Koszul duality. We would like to thank Joel Kamnitzer, Jonathan Brundan, Sabin Cautis, Alexander Kleshchev, Simon Riche, Anthony Licata, Rapha\"el Rouquier, Mikhail Khovanov, Will Hardesty, Michael Ehrig, Oded Yacobi, David Yang, You Qi and Kevin Coulembier for helpful discussions. We are grateful to Jim Humphreys for helpful comments on a previous version of this manuscript. The first author would also like to thank the University of Utah (in particular, Peter Trapa), and the University of Sydney (in particular, Gus Lehrer and Ruibin Zhang) for supporting this research.
During the revision of this paper, the second named author was affiliated with the Institute of Science and Technology Austria, Hausel Group, supported by
the Advanced Grant Arithmetic and Physics of Higgs moduli spaces No. 320593 of the European Research Council.

\section{Construction of the categorification}
\label{thmAproof}
In this section, we construct the $\fs\fl_2$-categorification using blocks of representations of $\fs\fl_n$, and prove  Theorem~\ref{ThmIntA}. In Sections~\ref{subsec:sl2} and \ref{modularrep}, we collect some background material about categorification and modular representations of Lie algebras, that will be used in later sections. The reader may wish to return to these two sections after reading Section \ref{subsec:const_state}, and readers familiar with the material may wish to skip Sections~\ref{subsec:sl2} and \ref{modularrep}. 
\subsection{Categorical $\mathfrak{sl}_2$-actions} \label{subsec:sl2}
We give an overview of categorical $\mathfrak{sl}_2$-actions, and state some of Chuang and Rouquier's results \cite{cr}; this sub-section is purely expository.

\begin{definition} \label{weaksl2} A weak $\mathfrak{sl}_2$-categorification
is the data of an adjoint pair $(E,F)$ of exact endo-functors on a $\textbf{k}$-linear abelian category $\calA$, such that
\begin{enumerate}
\item the action of $e = [E]$ and $f = [F]$ on $V = \bbQ\otimes K(\calA)$ gives a locally-finite $\mathfrak{sl}_2$-representation.
\item $\calA$ admits a $t$-structure the heart of which is
artinian and noetherian, and
the classes of the simple objects are weight vectors;
\item $F$ is isomorphic to a left adjoint of $E$.
\end{enumerate} 
\end{definition}

Let $V = \oplus_{r \in \mathbb{Z}} V_r$ be the weight decomposition, let $\calA_{r}$ be the full subcategory consisting of objects in $\calA$ whose classes in the Grothendieck group  lie in $V_{r}$. One easy consequence of this definition is that we have a decomposition $\calA = \bigoplus_{r \in \mathbb{Z}} \calA_{r}$  \cite[Lemma~5]{cr}.

\begin{definition} \label{strongsl2} An $\mathfrak{sl}_2$-categorification is a weak $\mathfrak{sl}_2$-categorification, together with the data of $X \in \text{End}(E)$ and $T \in \text{End}(E^2)$, and $q \in \textbf{k}^{\times}, a \in \textbf{k}$, such that: \begin{itemize} \item $(1_E T) \circ (T 1_E) \circ (1_E T) = (T 1_E) \circ (1_E T) \circ (T 1_E)$ in $\text{End}(E^3)$ \item $(T + 1_{E^2}) \circ (T - q 1_{E^2}) = 0$ in $\text{End}(E^2)$ \item In $\text{End}(E^2)$, we have: $$T \circ (1_E X) \circ T = 
\begin{cases} q X 1_E \qquad \qquad \text{if $q \neq 1$ }
\\ X 1_E - T \qquad  \text{ if $q = 1$  }
\end{cases}$$
 \item $X-a$ is locally nilpotent \end{itemize} \end{definition}
 
The categorical $\mathfrak{sl}_2$-relation  
\begin{equation} \label{catrel} EF|_{\mathcal{A}_{-r}} \oplus \text{Id}_{\mathcal{A}_{-r}}^{\oplus r} \simeq FE|_{\mathcal{A}_{-r}}, \qquad EF|_{\mathcal{A}_r} \simeq FE|_{\mathcal{A}_r} \oplus \text{Id}_{\mathcal{A}_r}^{\oplus r} \end{equation} 
(here $r \geq 0$) follows \cite[Theorem~5.27]{cr} as a consequence of Definition~\ref{strongsl2}.
Roughly speaking, a $\mathfrak{sl}_2$-categorification is a collection of categories $\mathcal{A}_r$; adjoint functors between them $$E_{r+1}: \mathcal{A}_r \rightarrow \mathcal{A}_{r+2}, F_{r+1}: \mathcal{A}_{r+2} \rightarrow \mathcal{A}_r$$ that satisfy the categorical $\mathfrak{sl}_2$-relation. The results of Chuang and Rouquier imply that the notion of  $\mathfrak{sl}_2$-categorification satisfies this property. 

Below we briefly recall the definition of a \textit{strong categorical $\mathfrak{sl}_2$-action}; for sake of brevity, we omit some of the details and refer the reader to \cite[\S~2.1]{ckl} for a list of the compatibilities that must be satisfied. This data is essentially equivalent to a representation of the categorical quantum group as defined by Rouquier \cite{Rou} and Khovanov-Lauda \cite{khovlauda1}, \cite{khovlauda2} (see also \cite[Remark 2.2]{ckl}, and \cite{cl}, \cite{brundan} for related work). Given these pieces of data, one obtains an action of the quantum group $U_q(\mathfrak{sl}_2)$ on the split Grothendieck group  \cite[\S~2.2]{ckl11}; thus one may think of this as a categorification of a representation of the quantum group for $\mathfrak{sl}_2$. 

\begin{definition} \label{2LieAlgebra} A \textit{strong categorical $\mathfrak{sl}_2$-action} consists of the following pieces of data: \begin{itemize} 
\item For each $-n \leq r \leq n$, a $\textbf{k}$-linear, $\mathbb{Z}$-graded additive category $\mathcal{D}(r)$. 
Let $\mathcal{D}(r)=0$ if $r \notin [-n, n]$. 
Here graded means that each $D(r)$ has a shift functor $\langle1\rangle$ which is an equivalence.
\item  For each $k \geq 1$, functors $E^{(k)}(r): \mathcal{D}(r-k) \rightarrow \mathcal{D}(k+r)$ and $F^{(k)}(r): \mathcal{D}(k+r) \rightarrow \mathcal{D}(r-k)$ Denote $E(r):=E^{(1)}(r), F(r):=F^{(1)}(r)$; we refer to $E^{(k)}(r), F^{(k)}(r)$ as the divided powers. 
An octuple of natural transforms $(\eta_1,\eta_2,\epsilon_1,\epsilon_2, \iota, \pi, \hat{T}(r),\hat{X}(r))$:
\item Adjunction morphisms: $$ \eta_1: \text{id} \rightarrow F^{(k)}(r) E^{(k)}(r) \langle rk \rangle, \qquad \eta_2: \text{id} \rightarrow E^{(k)}(r) F^{(k)}(r) \langle -rk \rangle $$ $$ \epsilon_1: F^{(k)}(r) E^{(k)}(r) \rightarrow \text{id} \langle rk \rangle, \qquad \epsilon_2: E^{(k)}(r) F^{(k)}(r) \rightarrow \text{id} \langle -rk \rangle $$ \item Morphisms: $$ \iota: E^{(k+1)}(r) \langle k \rangle \rightarrow E(k+r) E^{(k)}(r-1), \; \; \; \pi: E(k+r) E^{(k)}(r-1) \rightarrow E^{(k+1)}(r) \langle -k \rangle $$ \item Morphisms: $$ \hat{X}(r): E(r) \langle -1 \rangle \rightarrow E(r) \langle 1 \rangle, \qquad \hat{T}(r): E(r+1) E(r-1) \langle 1 \rangle \rightarrow E(r+1) E(r-1) \langle -1 \rangle$$ \end{itemize} 
These  subject to  compatibility conditions \cite[\S~2.1]{ckl}. \footnote{Here $r$ is what has been denoted by $\lambda$ in \cite{ckl}, and $k$ here is what  has been denoted by $r$ in {\it loc. cit.}.}
\end{definition} 
The notion of a ``geometric categorical $\mathfrak{sl}_2$-action'' has been developed by Cautis, Kamitzer, and Licata\cite{ckl}, which is more convenient when working in the framework of coherent sheaves, and they show that it can be used to construct a categorical $\mathfrak{sl}_2$-action. To avoid repetition, we omit the definition \cite[Definition~2.2, \S~4, \S~5]{ckl}. 

\subsection{Modular representations of Lie algebras} \label{modularrep} Let $G$ be a semisimple, simply connected, algebraic group, with Lie algebra $\mathfrak{g}$, defined over a field $\textbf{k}$ of characteristic $p$. Assume that $p$ satisfies conditions (H1)-(H3)  \cite[B.6]{jantzen2}. In the case that we will be studying, where $G=SL_n(\textbf{k})$, $\mathfrak{g}=\mathfrak{sl}_n(\textbf{k})$, it is sufficient that $p>n$. Let $\mathfrak{g} = \mathfrak{n}^- \oplus \mathfrak{h} \oplus \mathfrak{n}^+$ be the triangular decomposition,  $W$ be the associated Weyl group, and $\rho$ the half-sum of all positive roots. Recall that we have the twisted action of $W$ on $\mathfrak{h}^*$: $$w \cdot \lambda = w(\lambda + \rho) - \rho$$
 In this subsection we will collect some facts about the representation theory of $\mathfrak{g}$, and refer the reader to Jantzen's expository article \cite{jantzen2} for a detailed treatment. 

First we will need the following description of the center of the universal enveloping algebra $\textbf{U} \mathfrak{g}$. Define the Harish-Chandra center $Z_{\text{HC}}$ to be $Z_{\text{HC}} = (\textbf{U}\mathfrak{g})^G$. Given an element $x \in \mathfrak{g}$, it is known that there exists a unique $x^{[p]} \in \mathfrak{g}$ such that $x^p - x^{[p]} \in Z(\textbf{U} \mathfrak{g})$. Then the Frobenius center $Z_{\text{Fr}}$ is defined to be the subalgebra generated by $\{ x^p - x^{[p]} \; | \; x \in \mathfrak{g} \}$. In fact, for $p >n$, $Z(\textbf{U} \mathfrak{g})$ is generated by $Z_{\text{Fr}}$ and $Z_{\text{HC}}$ \cite[\S~C]{jantzen2}. 

\begin{definition} 
For any $\nu\in\fh^*$, its orbit under the Weyl group dot-action $\nu \in \mathfrak{h}^* /\!/ W$ defines a character of the Harish-Chandra character $Z_{\text{HC}}$.  (We follow the convention that the Harish-Chandra character are $\rho$-shifted.) Let $\text{Mod}^{\text{fg}}_{0, \nu}(\textbf{U} \mathfrak{g})$ be the full category of all finitely generated modules, where the Frobenius center $Z_{\text{Fr}}$ acts with a fixed zero character, and  $Z_{\text{HC}}$ acts via a generalized central character $\nu \in \mathfrak{h}^* /\!/ W$. We will refer to $\text{Mod}^{\text{fg}}_{0, \nu}(\textbf{U} \mathfrak{g})$ as a {\it block}. The {\it restricted Lie algebra} $\textbf{U}_0\mathfrak{g}$ is the quotient of $\textbf{U}\mathfrak{g}$ by the ideal $\langle x^p - x^{[p]} \; | \; x \in \mathfrak{g} \rangle$, and $\text{Rep}(\textbf{U}_0 \mathfrak{g})$ be the category of all finitely generated representations of $\textbf{U}_0 \mathfrak{g}$. \end{definition}
\begin{definition} 
A central character $\nu \in \mathfrak{h}^*$ is said to be regular if its (twisted) $W$-orbit contains $|W|$ elements, and singular otherwise. \end{definition} 
The following decomposition \cite[Section C]{jantzen2} is why we refer to the categories $\text{Mod}^{\text{fg}}_{0, \nu}(\textbf{U} \mathfrak{g})$ as {\it blocks}; we will refer to it as a singular (or regular) block depending on whether $\nu$ is singular (or regular). The principal block is the one containing the trivial (i.e. one-dimensional) module. $$\text{Rep}(\textbf{U}_0 \mathfrak{g}) \simeq \bigoplus_{\nu \in \mathfrak{h}^*/\!/W} \text{Mod}^{\text{fg}}_{0, \nu}(\textbf{U} \mathfrak{g})$$ 
\begin{definition} 
Let $\mu \in \mathfrak{h}^*$ be an integral weight, i.e., comes from $\Lambda=\Hom(T,\bbG_m)$. 
Define $\textbf{U}_0\mathfrak{b}$ to be the quotient of $U\mathfrak{b}$ by the ideal $\langle x^p - x^{[p]} \; | \; x \in \mathfrak{b} \rangle$, and $\textbf{k}_{\mu}$ the $\textbf{U}_0\mathfrak{b}$-module with highest weight $\mu$. Then the baby Verma module $Z(\mu)$ is defined as: $$ Z(\mu) = \textbf{U}_0\mathfrak{g} \otimes_{\textbf{U}_0\mathfrak{b}} \textbf{k}_{\mu}$$ 
\end{definition} 
It is defined analogously to its counterparts in category $\mathcal{O}$, and they share many properties. Most importantly, the module $Z(\mu)$ has unique  simple quotient, which we denote $L(\mu)$. Thus for any integral weight $\mu \in \mathfrak{h}^*$, we have a simple module $L(\mu)$; these are pairwise non-isomorphic, any simple module in $\text{Rep}(\textbf{U}_0 \mathfrak{g})$ occurs in this way, and  $L(\mu)$ lies in $\text{Mod}^{\text{fg}}_{0, \nu}(\textbf{U} \mathfrak{g})$ precisely if $\mu \in W \cdot \nu$. In other words, the irreducible objects in $\text{Mod}^{\text{fg}}_{0, \nu}(\textbf{U} \mathfrak{g})$ are $\{ L(w \cdot \nu) \; | \; w \in W \}$. The number of simple objects in $\text{Mod}^{\text{fg}}_{0, \nu}(\textbf{U} \mathfrak{g})$ is hence equal to the size of the $W$-orbit of $\nu$ (using the twisted Weyl group action) \cite[Sections C and D]{jantzen2}. 
\begin{definition} Given a dominant integral weight $\lambda \in \Lambda^+=\{ \langle \lambda ,\alpha \rangle \geq 0 \hbox{ for each positive root } \alpha\}$, let $\gamma = - w_0 \lambda$. Here $w_0 \in W$ is the long word in the Weyl group, and let $\mathcal{O}(\gamma)$ be the corresponding line bundle on the flag variety. Then the Weyl module $\textbf{V}(\lambda)$ is defined to be the following $G$-module: $$\textbf{V}(\lambda) = H^0(\mathcal{O}(\gamma))^*$$ \end{definition}
Weyl modules can also be defined over the Lie algebra $\mathfrak{g}$ using the below equivalence, involving the first Frobenius kernel $G_1$. See   \cite[Chapter 7]{jantzen} for a definition of the subscheme $G_1 \subset G$ and   \cite[Chapter 5]{hum06} for a proof of the equivalence below $$\text{Rep}(G_1) \simeq \text{Rep}(\textbf{U}_0 \mathfrak{g}).$$ \begin{definition} 
The Weyl module $V(\lambda) \in \text{Rep}(U_0 \mathfrak{g})$ is defined by first taking the image of $\textbf{V}(\lambda)$ under the restriction map: $\text{Rep}(G) \rightarrow \text{Rep}(G_1)$, and then take the image under the above equivalence: $\text{Rep}(G_1) \simeq \text{Rep}(\textbf{U}_0 \mathfrak{g})$. 
\end{definition} 
Intuitively, the Weyl modules $V(\lambda)$ may be thought of as the reduction $V_{\lambda} \otimes_{\mathbb{Z}} \textbf{k}$, where $V_{\lambda}$ is the irreducible module in characteristic zero, equipped with a $\mathbb{Z}$-form (to make this precise, one needs the notion of a minimal admissible lattice  \cite[Section 1.1]{lattice}. (See  \cite[Section 2.13]{jantzen2} and Humphreys' expository note \cite{weyl} for more details about Weyl modules.)

Let $\overline{\lambda} \in \mathfrak{h}^*$ be the image of $\lambda \in \Lambda$ after reducing modulo $p$. 
Then the module $V(\lambda)\in \text{Rep}(U_0 \mathfrak{g})$ has a unique maximal submodule, and $L(\overline{\lambda})$  is the quotient  \cite[Section 2.14]{jantzen}. Further, $V(\lambda)$ lies in the same block as $L(\overline{\lambda})$ (this follows from the alternative approach to the Weyl modules using minimal admissible lattices).

\begin{remark} For readers who are more familiar with the category $\text{Rep}(G)$, it is worth noting that any  irreducible representation of $\textbf{U}_0 \mathfrak{g}$ can be lifted to an irreducible representation of $G$ with a restricted highest weight (i.e. a weight $\lambda \in \Lambda_{\mathbb{Z}}$ with $0 \leq \langle \lambda, \check{\alpha} \rangle < p$ for each simple $\alpha$). More precisely, under the equivalence $\text{Rep}(G_1) \simeq \text{Rep}(\textbf{U}_0 \mathfrak{g})$, the irreducible $L(\overline{\lambda})$ corresponds to the restriction of the irreducible module for $G$ with highest weight $\lambda$, under the restriction map $\text{Rep}(G) \rightarrow \text{Rep}(G_1)$. All other irreducibles in $\text{Rep}(G)$ may be obtained using the Steinberg tensor product theorem. In this paper, we will solely be dealing with representations of the restricted enveloping algebra $\textbf{U}_0 \mathfrak{g}$; more specifically, the blocks $\text{Mod}^{\text{fg}}_{0, \nu}(\textbf{U} \mathfrak{g})$. \end{remark}

\subsection{Statement of Theorem~\ref{ThmIntA}} \label{subsec:const_state}
Let $\mathfrak{g} = \mathfrak{sl}_{n}$ be defined over an algebraically closed field $\textbf{k}$ of characteristic $p$, with $p>n$.

Our goal is to categorify the $\mathfrak{sl}_2$-action on $V = (\mathbb{C}^2)^{\otimes n}$. Denote its weight spaces, and the restrictions of the operators $E$ and $F$ between them, as follows (here $V_{-n+2r} = 0$ if $r \notin \{ 0, 1, \cdots, n \}$): \begin{align*} V = (\mathbb{C}^2)^{\otimes n} = \bigoplus_{r=0}^{n} V_{-n + 2r}, \qquad & \text{dim} (V_{-n+2r}) = {n \choose r} \end{align*} \begin{align*} E = \bigoplus_{r=0}^n E_{-n+2r+1}, \qquad &E_{-n+2r+1}: V_{-n+2r} \rightarrow V_{-n+2r+2} \\ F = \bigoplus_{r=0}^n F_{-n+2r+1}, \qquad & F_{-n+2r+1}: V_{-n+2r+2} \rightarrow V_{-n+2r}. \end{align*}

Let $e_i$ be the diagonal matrices whose $(i,i)$th entry is 1 and zeros otherwise, so that the positive roots of $\fg$ are given by $\{ e_i - e_j \; | \; i < j \}$, and the simple roots are $e_i - e_{i+1}$ for $1 \leq i \leq n-1$. Recall that $\rho$ may be expressed as follows: $$ \rho = \frac{n-1}{2} e_1 + \frac{n-3}{2} e_2 + \cdots + \frac{1-n}{2} e_n. $$ Recall that the fundamental weights are $\lambda_i = e_1 + \cdots + e_i$, for $1 \leq i \leq n-1$. For $0 \leq r \leq n$, let us define $$\mu_r = -\rho + e_1 + e_2 + \cdots + e_r, \qquad \mathcal{C}_{-n+2r} = \text{Mod}^{\text{fg}}_{0, \mu_r}(\textbf{U} \mathfrak{g})$$
 
From the results of Section \ref{modularrep}, we see that the Grothendieck group of $\mathcal{C}_{-n+2r}$ has rank ${{n}\choose{r}}$, {\it i.e.}, equal to the dimension of the vector space $V_{-n+2r}$. Let $\textbf{k}^n$ be the standard representation of $\mathfrak{sl}_n$, and by $(\textbf{k}^n)^{*}$ its dual, respectively. Let us define:
\begin{align}\label{eqn:trans_func} \textbf{E}_{-n+2r+1}: \mathcal{C}_{-n+2r} \rightarrow \mathcal{C}_{-n+2r+2}, \qquad & \textbf{E}_{-n+2r+1}(M) = \text{proj}_{\mu_{r+1}}(M \otimes \textbf{k}^n); \\ \textbf{F}_{-n+2r+1}: \mathcal{C}_{-n+2r+2} \rightarrow \mathcal{C}_{-n+2r}, \qquad & \textbf{F}_{-n+2r+1}(N) = \text{proj}_{\mu_r}(N \otimes (\textbf{k}^n)^*). \notag
\end{align} 
Here $\text{proj}_\mu$  for any $\mu\in\fh/\!/W$ is the functor taking the direct summand on which the Harish-Chandra center acts by $\mu$.
\begin{remark} These functors are analogous to those of Bernstein, Frenkel, and Khovanov  \cite[Theorem 1, Corollary 1]{bfk}. Instead of the singular blocks of category $\mathcal{O}$ used there, we use singular blocks of modular representations of $\mathfrak{sl}_n$. The singular Harish-Chandra characters we use are the same as  used by Bernstein, Frenkel, and Khovanov \cite{bfk}. \end{remark}
In the below, note that although we are categorifying a representation of $\mathfrak{sl}_2(\mathbb{C})$, the categories used in this construction are representation categories of $\mathfrak{sl}_n$ over a field of positive characteristic. 
\begin{thmbis}{ThmIntA} \label{theorema} 
Let $\mathcal{C}_{-n+2r}:=\text{Mod}_{0,\mu_r}(\textbf{U}\fg)$, and let $\textbf{E}_{-n+2r+1}$ and $\textbf{F}_{-n+2r+1}$ be the functors defined in \eqref{eqn:trans_func}. There exist natural transformations satisfying the conditions of a strong $\fs\fl_2$-categorification as in Definition~\ref{strongsl2}. 

On $\bigoplus_{r}K^0(\calC_{-n+2r})$,  the functors \eqref{eqn:trans_func} recovers the action of $\mathfrak{sl}_2$ on $(\mathbb{C}^2)^{\otimes n}$. 
\end{thmbis}
As a corollary, one obtains the categorical $\mathfrak{sl}_2$-relation, in accordance with \cite[Theorem 5.27]{cr}:
$$ \textbf{F}_{-n+2r+1} \circ \textbf{E}_{-n+2r+1} \oplus \text{Id}^{\oplus r} \simeq \textbf{E}_{-n+2r-1} \circ \textbf{F}_{-n+2r-1} \oplus \text{Id}^{\oplus n-r} $$

\begin{remark} \label{cr7.5} In  \cite[Section~7.5]{cr}, Chuang and Rouquier construct an $\mathfrak{sl}_2$ categorification using rational representations of $G = \text{SL}_n(\textbf{k})$. Using the equivalence $\text{Rep}(\textbf{U}_0(\mathfrak{g})) \simeq \text{Rep}(G_1)$, we can restate our categorification using blocks of $\text{Rep}(G_1)$. Under the natural restriction map $\text{Rep}(G) \rightarrow \text{Rep}(G_1)$, one can show that the two categorifications are compatible, {\it i.e.}, the restriction maps commute with the translation functors between the blocks. We also expect that Theorem A can be extended to higher Frobenius kernels. Let $G_r$ be the kernel of the $r$-th Frobenius map, $F^r: G \rightarrow G$. Our proof relies on manipulations with Weyl modules, which also works in this level of generality. It would be interesting to compute the $\mathfrak{sl}_2$-representation obtained by counting the number simple objects in the Grothendieck groups of the categories corresponding to the weight spaces. It is straightforward to check that the two categorifications would be compatible, i.e. that the restriction maps commute with translation functors between the blocks. \end{remark}

\begin{remark} \label{K0} In Bernstein, Frenkel, and Khovanov's categorification \cite{bfk}, one has an explicit identification between the Grothendieck groups of the categories involved (singular blocks of category $\mathcal{O}$) and the weight spaces that they categorify. The classes of the Verma modules correspond to the standard basis in these weight spaces. In our setting, we do not have an explicit identification, although Theorem A implies that an identification exists. The baby Verma modules all have the same class in the Grothendieck group, so it is not clear which modules categorify the standard basis. This issue is fixed in the sequel to this paper (see Section 3.2 of \cite{nzh2}) by looking at a graded version of these categories. In that section we also calculate the images of the baby Verma modules under the translation functors $\textbf{E}_{-n+2r+1}$ and $\textbf{F}_{-n+2r+1}$ in the Grothendieck group; this is a straightforward adaptation of Propositions 6 and 7 in Section 3.1 of \cite{bfk}. See also Section \ref{further} for a related discussion. \end{remark} 

\subsection{Perverse equivalence}
\label{subsec:perv}
The main result of Chuang and Rouquier \cite{cr} applied to this setting yields that we have the existence of a derived equivalence \[\Phi_r:D^b(\calC_{-n+2r})
\to
D^b(\calC_{n-2r}),\] lifting
the action of $\exp(-F) \exp(E) \exp(-F): K^0(\calC_{-n+2r})\to K^0(\calC_{n-2r})$. Furthermore, the behavior of this derived equivalence with respect to the natural $t$-structures is controlled by a perversity function as follows. 

Let $S$ be the set of simple objects of $\calC_r$. We define two filtrations of $S$:
\[S_i = \{V \in S\mid  F_{i+1}V = 0\}\hbox{ and } S'_i = \{V \in S \mid E_{i+1}V = 0\}.\]
These filtrations on $S$ induced filtrations on  $\calC_r$ 
by Serre subcategories, denoted by $\calC_{r,i}$ and $\calC_{r,i}'$ respectively. Let $D_{\calC_{r,i}}(\calC_r)$ denote the thick subcategory
of $D^b(\calC_r)$ of complexes with cohomology in $\calC_{r,i}$; similar for $D_{\calC_{r,i}'}(\calC_r)$.

The following is a direct consequence of Theorem~\ref{theorema}  and \cite[Proposition~8.4]{CR2}.
\begin{corollary}\label{cor:2.13}
For any $r$ and  $i$, $\Phi_r[i]$
restricts to an equivalence $D^b_{\calC_{-n+2r,i}}(\calC_{-n+2r})\to D^b_{\calC_{n-2r,i}}(\calC_{n-2r})$, and the induced equivalence between quotient triangulated
categories \[D^b_{\calC_{-n+2r,i}}(\calC_{-n+2r})/D^b_{\calC_{-n+2r,i+1}}(\calC_{-n+2r})\cong D^b_{\calC_{n-2r,i}}(\calC_{n-2r})/D^b_{\calC_{-n+2r,i+1}}(\calC_{-n+2r})\] restricts to an equivalence \[\calC_{-n+2r,i}/\calC_{-n+2r,i+1}\cong \calC_{n-2r,i}/\calC_{-n+2r,i+1}.\]
\end{corollary}

Following the terminologies of Chuang and Rouquier \cite{CR2}, the above equivalence is a perverse equivalence, with respect to perversity function $p(i)=i$. 
In particular, for two abelian categories $A$ and $A'$ and a perverse equivalence $F:D^b(A)\to D^b(A')$ with respect to a perversity function $p$, the abelian category $A'$ is determined by $ A$ and $p$ and vice versa \cite[Proposition 4.17]{CR2}.

\subsection{Outline of the proof}
To prove Theorem \ref{theorema}, first we will show that this gives a weak $\mathfrak{sl}_2$-categorification as in Definition~\ref{weaksl2}. This is a consequence of the following two results, the first of which is well-known, and the second is a combinatorial calculation. Note there are infinitely many Weyl modules $V(\lambda)$ lying in each block. This lemma implies that one can choose a subset that constitutes a basis. 

\begin{lemma} \label{span} The Grothendieck group of $\mathcal{C}_{-n+2r}$ is spanned by the classes of the Weyl modules which lie in it. \end{lemma}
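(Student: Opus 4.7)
From Section~\ref{modularrep}, the classes of simple modules $\{[L(w\cdot\mu_r)] : w\in W/W_{\mu_r}\}$ form a $\Z$-basis of $K^0(\mathcal{C}_{-n+2r})$ of cardinality $\binom{n}{r}$, where $W_{\mu_r}$ is the stabilizer of $\mu_r$ under the dot action. My plan is to exhibit $\binom{n}{r}$ Weyl modules in $\mathcal{C}_{-n+2r}$ whose classes are $\Z$-linearly independent, hence form a second basis.

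For each coset $w\in W/W_{\mu_r}$, I first select a dominant integral lift $\lambda_w\in\Lambda^+_{\Z}$ of $w\cdot\mu_r$ modulo $p\Lambda_{\Z}$. Concretely, the choice $\lambda_w := w\cdot\mu_r + pN\rho$ works for any fixed $N\gg 0$: the shift lies in $p\Lambda_{\Z}$, so does not alter the block, and $\langle pN\rho,\alpha_i^\vee\rangle = pN$ dominates $\langle w\cdot\mu_r,\alpha_i^\vee\rangle$ for every simple coroot $\alpha_i^\vee$. Hence $V(\lambda_w)\in\mathcal{C}_{-n+2r}$.

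Next, I invoke the standard submodule structure of Weyl modules (Section~2.14 of \cite{jantzen2}): $V(\lambda_w)$ has $L(\overline{\lambda_w})=L(w\cdot\mu_r)$ as its unique simple quotient with composition multiplicity one, and all other composition factors are of the form $L(\overline{\mu})$ for weights $\mu<\lambda_w$ in the dominance order. Projecting onto the block, this gives
\[
[V(\lambda_w)] \;=\; [L(w\cdot\mu_r)] \;+\; \sum_{v\neq w} b_{w,v}\,[L(v\cdot\mu_r)] \quad\text{in } K^0(\mathcal{C}_{-n+2r}),
\]
with $b_{w,v}\in\Z_{\geq 0}$. It then remains to verify that the $\binom{n}{r}\times\binom{n}{r}$ matrix with these entries (and $1$'s on the diagonal) is invertible over $\Z$.

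\textbf{Main obstacle.} The delicate step is the invertibility of the transition matrix. The naive dominance order on the lifts $\lambda_w$ does not descend cleanly to an order on the orbit $W\cdot\mu_r$, because the shift $pN\rho$ overwhelms the differences $(w-v)(e_1+\cdots+e_r)$, so a composition factor $L(v\cdot\mu_r)$ of $V(\lambda_w)$ need not correspond to $v$ lying below $w$ in any obvious order. My plan for overcoming this is to appeal to the translation principle: choose a regular integral weight $\mu'$ adjacent to $\mu_r$ on the relevant wall, verify the analogous spanning statement for the regular block $\mathrm{Mod}_{0,\mu'}^{\mathrm{fg}}(\mathbf{U}\mathfrak{g})$ (where the standard triangular relationship between Weyl modules and simples in Bruhat order makes the matrix unitriangular by construction), and then apply the exact wall-crossing translation functor $T_{\mu'}^{\mu_r}$, which sends Weyl modules to Weyl modules (or to zero), and carries a spanning set of Weyl module classes onto a spanning set in $K^0(\mathcal{C}_{-n+2r})$ with exactly $|W|/|W_{\mu_r}|=\binom{n}{r}$ nonzero images, one for each coset.
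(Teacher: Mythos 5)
There is a genuine gap, and it sits exactly where you flagged it: the invertibility of the transition matrix is never actually established, and both of your attempts to get it fail. First, the intermediate claim that $[V(\lambda_w)]=[L(w\cdot\mu_r)]+\sum_{v\neq w}b_{w,v}[L(v\cdot\mu_r)]$ with diagonal coefficient $1$ is false for your lifts $\lambda_w=w\cdot\mu_r+pN\rho$: writing $\lambda_w=\lambda_0+p\lambda_1$ with $\lambda_0$ restricted, the head $L(\lambda_w)$ of the $G$-Weyl module restricts to $G_1$ as $L(\lambda_0)^{\oplus\dim L(\lambda_1)}$ by Steinberg's tensor product theorem, so the coefficient of $[L(w\cdot\mu_r)]$ is at least $\dim L(\lambda_1)\gg 1$ (and lower composition factors $L(\mu)$ with $\mu_0=\lambda_0$ contribute further copies). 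Second, the proposed repair only relocates the problem: in the regular block of $\mathrm{Mod}^{\mathrm{fg}}_{0,\mu'}(U\mathfrak{g})$ there is no ``unitriangularity in Bruhat order by construction''. The composition factors of a Weyl module are controlled by the dominance order on $\Lambda$, not by any order on the finite orbit $W\cdot\mu'$ after reduction mod $p$, and non-restricted factors again enter with Steinberg multiplicities $\dim L(\mu_1)$; this is precisely the obstruction you identified for the singular block, and it is just as present at a regular weight. (The translation step itself also needs care: for these $U_0\mathfrak{g}$-blocks the projection of $V(\lambda)\otimes V(\nu)$ typically carries a filtration by several Weyl modules, not ``a Weyl module or zero'', though for a spanning argument that would be harmless.)

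Note also that you are trying to prove more than the lemma asks: you want a distinguished basis of $\binom{n}{r}$ Weyl classes, whereas only spanning by the (infinitely many) Weyl modules in the block is claimed. The paper's proof exploits this: the Weyl module classes span $K^0(\mathrm{Rep}(G))$ by genuine highest-weight triangularity (which is valid there, since dominant weights are honestly ordered), the restriction map $K^0(\mathrm{Rep}(G))\to K^0(\mathrm{Rep}(G_1))\cong K^0(\mathrm{Rep}(U_0\mathfrak{g}))$ is surjective because every irreducible $G_1$-module lifts to a restricted irreducible $G$-module, and then one splits into blocks using that each Weyl module lies in a single block. That route avoids any transition-matrix computation mod $p$; if you want to salvage your approach, you would need an actual proof of linear independence of your chosen Weyl classes (e.g.\ via base change to characteristic $0$ or a character argument), not an appeal to a triangularity that does not survive reduction mod $p$.
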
 

\begin{prop} \label{calc} Suppose that $V(\lambda)$ lies in $\mathcal{C}_{-n+2r}$. We have the following equality in the Grothendieck group: $$ [\{ \textbf{F}_{-n+2r+1} \circ \textbf{E}_{-n+2r+1} \oplus \text{Id}^{\oplus r} \} V(\lambda)] \simeq [ \{ \textbf{E}_{-n+2r-1} \circ \textbf{F}_{-n+2r-1} \oplus \text{Id}^{\oplus n-r} \} V(\lambda)] $$ \end{prop}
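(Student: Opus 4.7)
The plan is to verify this Grothendieck-group identity directly, by decomposing both sides as explicit integer combinations of classes of Weyl modules and exploiting the characteristic-independence of the Weyl character formula.

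First, since Weyl modules have Weyl characters in any characteristic, the classical Pieri rule gives the Grothendieck-group identity
\begin{equation*}
[V(\lambda) \otimes \mathbf{k}^n] = \sum_{i:\,\lambda + e_i \in \Lambda^+_{\mathbb{Z}}} [V(\lambda + e_i)],
\end{equation*}
and analogously for $(\mathbf{k}^n)^*$. A Weyl module $V(\mu)$ lies in $\mathcal{C}_{-n+2s}$ precisely when the coordinates of $\mu + \rho$, reduced modulo $p$, form a multiset of the shape $\{a^s, b^{n-s}\}$ with $b \equiv a - 1 \pmod p$. Writing $I_1$ and $I_0$ for the positions where the coordinates of $\lambda + \rho$ reduce to $a$ and $b$ respectively (so $|I_1| = r$, $|I_0| = n - r$), the projection onto $\mathcal{C}_{-n+2r+2}$ preserves $V(\lambda + e_i)$ if $i \in I_0$ (increasing the number of $a$'s by one) and kills it if $i \in I_1$ (the $i$-th residue becomes $a + 1$, a third value). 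Therefore
\begin{equation*}
[\textbf{E}_{-n+2r+1}(V(\lambda))] = \sum_{\substack{i \in I_0 \\ \lambda + e_i \in \Lambda^+_{\mathbb{Z}}}} [V(\lambda + e_i)], \qquad [\textbf{F}_{-n+2r-1}(V(\lambda))] = \sum_{\substack{j \in I_1 \\ \lambda - e_j \in \Lambda^+_{\mathbb{Z}}}} [V(\lambda - e_j)].
\end{equation*}

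Composing, and using that the $a$-positions of $\lambda + e_i$ (for $i \in I_0$) are $I_1 \cup \{i\}$, the branch $j = i$ of the second $\textbf{F}$-step produces a copy of $[V(\lambda)]$ while $j \in I_1$ yields cross terms. This gives
\begin{equation*}
[\textbf{F}_{-n+2r+1}\, \textbf{E}_{-n+2r+1}(V(\lambda))] = A_0\, [V(\lambda)] + \sum_{(i,j)} [V(\lambda + e_i - e_j)],
\end{equation*}
where $A_0 = |\{i \in I_0 : \lambda + e_i \in \Lambda^+_{\mathbb{Z}}\}|$ and the cross sum runs over $(i,j) \in I_0 \times I_1$ with $i \ne j$ satisfying $\lambda + e_i, \lambda + e_i - e_j \in \Lambda^+_{\mathbb{Z}}$. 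Symmetrically,
\begin{equation*}
[\textbf{E}_{-n+2r-1}\, \textbf{F}_{-n+2r-1}(V(\lambda))] = A_1\, [V(\lambda)] + \sum_{(i,j)} [V(\lambda + e_i - e_j)],
\end{equation*}
with $A_1 = |\{j \in I_1 : \lambda - e_j \in \Lambda^+_{\mathbb{Z}}\}|$ and the cross sum now subject to $\lambda - e_j, \lambda + e_i - e_j \in \Lambda^+_{\mathbb{Z}}$.

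The proposition then reduces to two combinatorial claims: (a) for $i \in I_0$, $j \in I_1$, $i \ne j$, the dominance of $\lambda + e_i - e_j$ automatically forces both $\lambda + e_i$ and $\lambda - e_j$ to be dominant, so the cross sums agree term-by-term; (b) the deficits $X_0 := (n - r) - A_0$ and $X_1 := r - A_1$ are equal, both counting the adjacent pairs $(k, k+1)$ with $k \in I_1$, $k+1 \in I_0$, and $(\lambda + \rho)_k - (\lambda + \rho)_{k+1} = 1$. Together these give $A_0 + r = A_1 + (n - r)$, the desired equality. Both (a) and (b) follow from short case analyses on the relative positions of $i - 1, i, j, j + 1$ within $I_0 \cup I_1$; the residue constraints $i \in I_0, j \in I_1$ rule out degenerate configurations. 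The main obstacle is organizing this case analysis cleanly; no deeper input beyond the Weyl character formula and the residue description of blocks is needed.
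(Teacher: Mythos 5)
Your proposal is correct and follows essentially the same route as the paper's proof: the Pieri rule for Weyl modules in the Grothendieck group, the residue (marked/unmarked row) description of which summands survive the block projection, and the final count reducing to a bijection between the ``deficit'' rows on each side (your pairs $(k,k+1)$ with $k\in I_1$, $k+1\in I_0$, $\lambda_k=\lambda_{k+1}$ are exactly the paper's atypical marked/unmarked rows). Your claim (a), matching the cross terms via dominance of $\lambda+e_i-e_j$ alone, is a slightly more careful organization of a step the paper treats implicitly, but it is a refinement within the same argument rather than a different approach.
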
 
The proofs of Lemma~\ref{span} and Proposition~\ref{calc} can be found in Section~\ref{subsec:proofA}.

To complete the proof of Theorem A, we need to construct endomorphisms $X \in \text{End}(E)$ and $T \in \text{End}(E^2)$, satisfying the compatibilities from Section \ref{subsec:sl2}. For this purpose,  we follow Chuang and Rouquier \cite[\S~7.4]{cr}, where they show that the categorification constructed by Bernstein, Frenkel, and Khovanov satisfies these properties.  

\subsection{Proof of Theorem~\ref{theorema}} 
\label{subsec:proofA}
Lemma \ref{span} is certainly not new, and known to experts. For the reader's convenience, we briefly sketch a proof.

\begin{proof}[Proof of Lemma \ref{span}] First, note that the classes of the Weyl modules in $\text{Rep}(G)$ span $K^0(\text{Rep}(G))$. To see this, the classes of the irreducibles form a basis, and the transition matrix between the irreducibles and Weyl modules is upper triangular, using the standard highest weight theory. Note however that this upper triangularity argument breaks down after passing to $\text{Rep}(\textbf{U}_0 \mathfrak{g})$.

Recall from Section \ref{modularrep} that there is a natural restriction map $\text{Rep}(G) \rightarrow \text{Rep}(G_1)$. Each simple object in $\text{Rep}(G_1)$ is the image of a simple object in $\text{Rep}(G)$  \cite[Section 2.2]{nakano}. Hence the restriction map is surjective on the level of Grothendieck groups. Therefore, $\text{Rep}(G_1)$ is spanned by the classes of all (restrictions of) Weyl modules. We also have $$\text{Rep}(G_1) \simeq \text{Rep}(\textbf{U}_0 \mathfrak{g})$$ hence all Weyl modules span the Grothendieck group of $\text{Rep}(\textbf{U}_0 \mathfrak{g})$. Lemma \ref{span} follows now using the fact that $\text{Rep}(\textbf{U}_0 \mathfrak{g})$ splits up as a direct sum of blocks, and that each Weyl module lies in one of these blocks. \end{proof}

\begin{proof}[Proof of Proposition \ref{calc}] Since $V(\lambda)$ lies in $\mathcal{C}_{-n+2r}$, $\overline{\lambda} \sim {\mu}_{r}$; thus, for some $w \in W$: $$ \overline{\lambda} = e_{w(1)} + \cdots + e_{w(r)} - \rho $$ Let us assume that each of the parts of $n$ parts of $\lambda$ has size at least $1$ (if not, simply add one box to each row of $\lambda$; this does not change $V(\lambda)$ as an $\mathfrak{sl}_n$-module). Let us refer to the rows indexed by $w(1), \cdots, w(r)$ as being ``marked'', and all other rows as being ``unmarked''. Let $S(\lambda) \subset \Lambda_{\mathbb{Z}}^+$ be the set of weights whose partitions are obtained from $\lambda$ by adding exactly one box. Let $T(\lambda) \subset \Lambda_{\mathbb{Z}}^+$ be the set of weights whose partitions are obtained from $\lambda$ by removing a box in some row. Recall the following two identities: \begin{align*} [V(\lambda) \otimes V] &= \sum_{\mu \in S(\lambda)} [ V(\mu) ] \\ [V(\lambda) \otimes V^*] &= \sum_{\mu \in T(\lambda)} [ V(\mu) ]. \end{align*} These are well-known in the characteristic zero setting. In our set-up, note that the identities hold in $K^0(\text{Rep}(G))$ since the character map is an isomorphism onto the Grothendieck group. Using the aforementioned functor $\text{Rep}(G) \rightarrow \text{Rep}(G_1) \simeq \text{Rep}(\textbf{U}_0 \mathfrak{g})$, these two formulas follow. By the definitions of $\textbf{E}_{-n+2r+1}, \textbf{F}_{-n+2r-1}$, we have: \begin{align*} [\textbf{E}_{-n+2r+1} (V(\lambda))] &= \sum_{\mu \in S(\lambda), \; \overline{\mu} \sim \mu_{r+1}} [V(\mu)] \\ [\textbf{F}_{-n+2r-1} (V(\lambda))] &= \sum_{\mu \in T(\lambda), \; \overline{\mu} \sim \mu_{r-1}} [V(\mu)] \end{align*} Suppose that  $\overline{\mu} \sim \mu_{r+1}$; then for some $w' \in S_n$, we have that:  \begin{align*}  \overline{\mu} + \rho &= w'(\mu_{r+1} + \rho) = w'(e_1 + \cdots + e_{r+1}) \\ \iff \overline{\mu} &= e_{w'(1)} + \cdots + e_{w'(r+1)} - \rho 
 \end{align*} It follows that for a given $\mu \in S(\lambda)$, we have that $\overline{\mu} \sim \mu_{r+1}$ precisely if the box being added doesn't lie in one of the $k$ marked rows. Similarly, a given $\mu \in T(\lambda)$ satisfies $\overline{\mu} \sim \mu_{r-1}$ precisely if a box being removed lies in one of the $r$ marked rows. Let us say that an unmarked (resp. marked) row is {\it typical} if it's possible to add (resp. remove) a box in that row; let us call a row {\it atypical} if it isn't typical. Let $Q(\lambda) \subset \Lambda^+$ consist of all weights whose partitions are obtained from $\lambda$ by adding a box in an typical, unmarked row, and removing a box in a typical, marked row. Then we have that: \begin{equation} \label{comb1} 
 [\textbf{F}_{-n+2r+1} \textbf{E}_{-n+2r+1} (V(\lambda))] = c_1 [V(\lambda)] + \sum_{\mu \in Q(\lambda)} [V(\mu)]; \qquad \text{($c_1$ is the number of typical, unmarked rows)}; 
 \end{equation} 
 \begin{equation}  \label{comb2} 
 [\textbf{E}_{-n+2r-1} \textbf{F}_{-n+2r-1} (V(\lambda))] =c_2 [V(\lambda)] + \sum_{\mu \in Q(\lambda)} [V(\mu)]; \qquad \text{($c_2$ is the number of typical, marked rows)}. 
 \end{equation} 
It remains to check  $c_1 + r = c_2 + (n-r)$, or equivalently that there is a bijection between atypical marked rows, and atypical unmarked rows. This follows from the observation that the $i$-th row is marked and atypical precisely if the $i+1$-st row is unmarked and atypical. To see this, if $i$-th row is marked, and atypical, then $\lambda_{i+1} = \lambda_i$. But using the explicit formula for $\overline{\lambda}$ and $\rho$, $\overline{\lambda_{i+1}} - \overline{\lambda_i} = 0,1$ or $2$, with the first case only occuring when $i \in \{ w(1), \cdots, w(r) \}$, and $i+1 \notin \{ w(1), \cdots, w(r) \}$. Hence the $(i+1)$-st row is unmarked and also atypical; the converse follows similarly.  \end{proof}
 
\begin{example} Let $\mathfrak{g} = \mathfrak{sl}_5, r=2$ and $p=7$. The weight $\lambda = (20,13,7,2,2)$ satisfies $\overline{\lambda} \sim \mu_{2}$, since $\overline{\lambda} = e_1 + e_4 - \rho$ and $\mu_{2} = e_1 + e_2 - \rho$. Then: \begin{align*} [\textbf{E}_{0}(V(\lambda))] &= [V(20, 13, 8, 2, 2)] + [V(20, 14, 7, 2, 2)] \\ [\textbf{F}_{-1}(V(\lambda))] &= [V(19, 13, 7, 2, 2)]. \end{align*} It is straightforward to verify equations (\ref{comb1}) and (\ref{comb2}). Here $c_1=2, c_2=1$ and $$ Q(\lambda) = \{ (19, 13, 8, 2, 2), (19, 14, 7, 2, 2) \} \qquad \qquad \qquad \qquad. \blacksquare $$ \end{example}

So far we have constructed a ``weak $\mathfrak{sl}_2$-categorification'' (see Definition \ref{weaksl2}). To complete the proof of Theorem \ref{theorema}, it suffices to construct $X \in \text{End}(\textbf{E})$ and $T \in \text{End}(\textbf{E}^2)$ that satisfy the conditions specified in Definition \ref{strongsl2} when $q=1$ and $a=0$; the functorial $\mathfrak{sl}_2$ relations then follow \cite[Theorem 5.27]{cr} (here $\textbf{E} = \bigoplus_{0 \leq r \leq n} \textbf{E}_{-n+2r+1}$). To do this, we closely follow the template of Chuang and Rouquier \cite[\S~7.4]{cr}, where they show that the categorification constructed by Bernstein, Frenkel, and Khovanov  \cite{bfk} is in fact an $\mathfrak{sl}_2$-categorification; only very minor modifications are needed. 

\begin{proof}[Proof of Theorem~\ref{theorema}] Given a module $M \in \text{Mod}^{\text{fg}}_{0, \mu}(\textbf{U} \mathfrak{g})$ for some $\mu$, define $\overline{X}_M \in \text{End}_{\mathfrak{g}}(V \otimes M)$ by noting that $\text{Hom}(V \otimes V^* \otimes M, M) = \text{End}(V \otimes M)$, and using the adjoint map $\mathfrak{g} \otimes M \rightarrow M$. More explicitly, $\overline{X}_M(v \otimes m) = \Omega(v \otimes m)$, where $\Omega = \sum_{i,j=1}^n e_{ij} \otimes e_{ji}$. Define also $\overline{T}_M \in \text{End}_{\mathfrak{g}}(V \otimes V \otimes M)$ by simply switching the factors: $T_M (v_1 \otimes v_2 \otimes m) = v_2 \otimes v_1 \otimes m$. This gives endomorphisms $\overline{X}$ (resp. $\overline{T}$) of the functor $V \otimes -$ (resp. $V \otimes V \otimes -$). 

Let us explain why the endomorphism $\overline{X}$ descends to an endomorphism $X \in \text{End}(\textbf{E})$ \cite[\S~7.4.3]{cr}. It is sufficient to identify $\textbf{E}$ as a generalised eigenspace of $\overline{X}$ acting on $V \otimes -$. This will in turn follow once we verify that $\textbf{E} Z(\lambda)$ is a generalised eigenspace of $\overline{X}$ acting on $V \otimes Z(\lambda)$, where $\lambda$ is a weight lying in the shifted $W$-orbit of $\mu$ (since any object is mapped onto surjectively by an extension of baby Vermas). If $C$ is the central Casimir element, and $\delta: \textbf{U}\mathfrak{g} \rightarrow \textbf{U}\mathfrak{g}\otimes \textbf{U}\mathfrak{g}$ denotes the comultiplication, we have that: \begin{align*} C = \sum_{i,j=1}^n e_{ij} e_{ji} &= \sum_{i=1}^n e_{ii}^2 + \sum_{1 \leq i < j \leq n} (e_{ii} - e_{jj}) + 2 \sum_{1 \leq i < j \leq n} e_{ji} e_{ij} \\ \Omega &= \frac{1}{2} (\delta(C) - C \otimes 1 - 1 \otimes C). \end{align*} Hence $C$ acts on the baby Verma module $Z(\lambda)$ via the scalar $$ b_{\lambda} = \sum_{i=1}^n \lambda_i^2 + \sum_{1 \leq i < j \leq n} (\lambda_i - \lambda_j). $$ Now consider the action of $\Omega$ on the sub-quotient of $V \otimes Z(\lambda)$ isomorphic to $Z(\lambda+e_i)$. Using the above formula for $\Omega$ in terms of $C$, it follows that $\Omega$ acts on this sub-quotient by the scalar $c_{\lambda, i} := \frac{1}{2}(b_{\lambda + e_i} - b_{\lambda} - b_{e_1})$. It is easy to compute that, provided $p>2$, $c_{\lambda, i} = c_{\lambda, j}$ precisely if $\lambda+e_i$ and $\lambda+e_j$ are in the same $W$-orbit (here one must take the $\rho$-shift into account).

The fact that $\overline{T}$ descends to an endomorphism $T \in \text{End}(E^2)$ follows \cite[Lemma~7.21]{cr}, as does the third condition in Definition \ref{strongsl2}.  The first, second and fourth conditions \cite[Definition~5.21]{cr} that we have to check are self-evident. This completes the proof. 
 \end{proof}
 

\subsection{The $\mathfrak{g}=\mathfrak{sl}_2(\textbf{k})$ example.} \label{n=2}

Let us describe the $\mathfrak{g}=\mathfrak{sl}_2(\textbf{k})$ case in detail. Much of what is discussed in this section is well-known, but we include many of the basic facts for the reader's convenience. 

Here there are two irreducible objects in the principal block: the one-dimensional module $L(0)$, and the $p-1$ dimensional module $L(-2)$ (note that in this cases, both of these irreducibles are Weyl modules). Recall that the baby Verma module $Z(0)$ (resp. $Z(-2)$) is generated by a weight vector $v_0$ (resp. $v_{-2}$) satisfying $Hv_0 = 0$ (resp. $Hv_{-2} = -2 v_{-2}$), and has $L(0)$ (resp. $L(-2)$) as its head; see Section \ref{modularrep} for more details. We have two short exact sequences \begin{align*} 0 \rightarrow L(-2) &\rightarrow Z(0) \rightarrow L(0) \rightarrow 0 \\ 0 \rightarrow L(0) &\rightarrow Z(-2) \rightarrow L(-2) \rightarrow 0 \end{align*} Denote the projective cover of $L(0)$ as $P(0)$, and the projective cover of $L(-2)$ as $P(-2)$. The projective covers $P(0)$ and $P(-2)$ both have dimension $2p$, and can be described as follows. Denote the irreducible Steinberg module $Z(-1)$; then $P(-2) = Z(-1) \otimes \textbf{k}^2$, since translation functors map projectives to projectives. 
Using the results from Humphreys \cite[Theorem 3, pg. 4]{humphreys} (see also  \cite[Chapter 10]{hum06}, in particular 10.6) it follows that $P(0)$ is the indecomposable summand of $Z(-1) \otimes Z(-1)$ containing the weight vector $v_{-1} \otimes v_{-1}$ (here $v_{-1} \in Z(-1)$ is the highest weight vector). The element $v_{-1} \otimes v_{-1}$ generates a module isomorphic to $Z(-2)$, and there exists $w \in Z(-1) \otimes Z(-1)$ in the zero weight space, such that $E(w) \in \langle v_{-1} \otimes v_{-1} \rangle$; then $P(0)$ is generated by $v_{-1} \otimes v_{-1}$ and $w$. Thus we have two short exact sequences: \begin{align*} 0 \rightarrow Z(-2) &\rightarrow P(0) \rightarrow Z(0) \rightarrow 0 \\ 0 \rightarrow Z(0) &\rightarrow P(-2) \rightarrow Z(-2) \rightarrow 0 \end{align*} Let $A = \text{End}(P(0) \oplus P(-2))$; then it follows that $\mathcal{C} \simeq \Mod A$.  Here $A$ is an $8$ dimensional algebra, since $$\text{dim}(\text{Hom}(P(i), P(j))) = [P(i) : L(j)] = 2$$ For $i,j \in \{ 0, -2 \}$, we construct $\psi_{i,j}^1, \psi_{i,j}^2 \in \text{Hom}(P(i), P(j))$ such that $\text{Hom}(P(i), P(j)) = \textbf{k} \psi_{i,j}^1 \oplus \textbf{k} \psi_{i,j}^2$. Let $\psi_{i,i}^1 = \text{id}_{P(i)}$, let $\psi_{0,0}^2$ be the composite map $P(0) \rightarrow Z(0) \rightarrow P(0)$, and let $\psi_{-2, -2}^2$ be the composite map $P(-2) \rightarrow Z(-2) \rightarrow P(-2)$. Let $\psi_{0, -2}^1$ be the composite map $P(0) \rightarrow Z(0) \rightarrow P(-2)$, and $\psi_{-2, 0}^1$ be the composite map $P(-2) \rightarrow Z(-2) \rightarrow P(0)$. To construct $\psi_{0, -2}^2$, note that we can lift the composite map $P(0) \rightarrow Z(0) \rightarrow Z(-2)$ to obtain a map $P(0) \rightarrow P(-2)$; $\psi_{0,-2}^2$ is the unique such map satisfying $\psi_{0,-2}^2(w)=F^{p-1}v_{-1} \otimes v_-$ (here $\textbf{k}^2 = \textbf{k} v_+ \oplus \textbf{k} v_-$). Similarly, to construct $\psi_{-2,0}^2$, we can lift the map $P(-2) \rightarrow Z(-2) \rightarrow Z(0)$ to obtain a map $P(-2) \rightarrow P(0)$; $\psi_{-2,0}^2$ is the unique such map satisfying $\psi_{-2,0}^2(v_{-1} \otimes v_+)=F^{p-1}(v_{-1} \otimes v_{-1})$ and $\psi_{-2,0}^2(v_{-1} \otimes v_-) = Fw$. 
\begin{lemma} The relations in $A$ are as follows; here $i,j \in \{ 0, -2 \}, k \in \{ 1, 2 \}$. \begin{itemize} \item $\psi_{i,j}^k \psi_{i', j'}^{k'} = 0$ unless $j' = i$. \item $\psi_{j,j}^1 \psi_{i,j}^k = \psi_{i,j}^k \psi_{i,i}^1 = \psi_{i,j}^k$; and $\psi_{j,j}^2 \psi_{i,j}^k = \psi_{i,j}^k \psi_{i,i}^2 = 0$ unless $k=1$ and $i=j$. \item If $i \neq j$: $\psi_{i,j}^1 \psi_{j,i}^1 = 0$, and $\psi_{i,j}^{k} \psi_{j,i}^{k'} = \psi_{j,j}^2$ unless $k = k' =1$. 
\end{itemize} \end{lemma}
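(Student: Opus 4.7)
The plan is to verify each family of relations directly from the explicit constructions of the morphisms $\psi^k_{i,j}$. The first relation $\psi^k_{i,j}\psi^{k'}_{i',j'}=0$ unless $j'=i$ is immediate from source/target considerations: the target of $\psi^{k'}_{i',j'}$ is $P(j')$ and the source of $\psi^k_{i,j}$ is $P(i)$. The identities $\psi^1_{j,j}\psi^k_{i,j}=\psi^k_{i,j}\psi^1_{i,i}=\psi^k_{i,j}$ in the second bullet are tautological since $\psi^1_{\ell,\ell}=\mathrm{id}_{P(\ell)}$.

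I would next handle the vanishing relations by exploiting the submodule structure of the projectives. Let $\bar j$ denote the element of $\{0,-2\}$ distinct from $j$. The defining short exact sequence $0\to Z(\bar j)\to P(j)\to Z(j)\to 0$ identifies $Z(\bar j)\subset P(j)$ as the kernel of the quotient $P(j)\twoheadrightarrow Z(j)$; moreover $P(j)$ has simple socle $L(j)$, agreeing with the socle of $Z(\bar j)$. Because $\mathrm{Hom}(Z(j),P(j))$ is one-dimensional, spanned by the head-to-socle composite $Z(j)\twoheadrightarrow L(j)\hookrightarrow P(j)$, $\psi^2_{j,j}$ factors as $P(j)\twoheadrightarrow L(j)\hookrightarrow P(j)$, with image the socle $L(j)$ and kernel containing $Z(\bar j)$. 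For $i\neq j$, the image of $\psi^1_{i,j}$ is exactly $Z(\bar j)\subset P(j)$, so it is killed by $\psi^2_{j,j}$; the image of $\psi^2_{i,j}$ in the quotient $Z(j)$ equals the socle $L(\bar j)\subset Z(j)$ by the lift construction, hence is killed by the further quotient $Z(j)\twoheadrightarrow L(j)$. The diagonal case $\psi^2_{j,j}\psi^2_{j,j}=0$ holds because $\mathrm{soc}(P(j))=L(j)\subset Z(\bar j)$. The companion identities $\psi^k_{i,j}\psi^2_{i,i}=0$ follow symmetrically: $\psi^2_{i,i}$ has image $L(i)=\mathrm{soc}(P(i))$, and $L(i)$ maps to $0$ under $\psi^1_{i,j}$ (for $i\neq j$, since $L(i)\subset\ker(P(i)\twoheadrightarrow Z(i))$) and admits no nonzero map into $Z(j)$ (for $i\neq j$, since $Z(j)$ has no submodule isomorphic to $L(i)$).

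For the third bullet (with $i\neq j$), the composite $\psi^1_{i,j}\psi^1_{j,i}$ vanishes because its middle segment $Z(j)\hookrightarrow P(i)\twoheadrightarrow Z(i)$ is zero by the defining short exact sequence. For the remaining three composites, each is an endomorphism of $P(j)$, and the same head/socle analysis shows that each is killed by the head projection $P(j)\twoheadrightarrow L(j)$; hence each lies in the one-dimensional subspace of $\mathrm{End}(P(j))$ spanned by $\psi^2_{j,j}$, and is therefore a scalar multiple of $\psi^2_{j,j}$. To pin down the scalar as $1$, I would evaluate both sides on the distinguished generators of the projectives: $v_{-1}\otimes v_{-1}$ and $w$ for $P(0)$, and $v_{-1}\otimes v_\pm$ for $P(-2)$. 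Using the prescriptions $\psi^2_{0,-2}(w)=F^{p-1}v_{-1}\otimes v_-$, $\psi^2_{-2,0}(v_{-1}\otimes v_+)=F^{p-1}(v_{-1}\otimes v_{-1})$, $\psi^2_{-2,0}(v_{-1}\otimes v_-)=Fw$ and the projection-inclusion descriptions of the $\psi^1_{i,j}$, each composite reduces to a direct computation inside $Z(-1)\otimes Z(-1)$ or $Z(-1)\otimes\textbf{k}^2$ that matches the socle generator of $\psi^2_{j,j}$.

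The main obstacle is this final scalar-matching step: while the submodule/quotient analysis pins each $\psi^2_{j,j}$-identity down up to scalar, verifying that the coefficient equals $1$ requires careful tracking of the particular normalizations used to define $\psi^2_{i,j}$ for $i\neq j$ and the inclusions $Z(j)\hookrightarrow P(\bar j)$, all tied together through the $\mathfrak{sl}_2$-action on $Z(-1)\otimes Z(-1)$.
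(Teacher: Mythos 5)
Your overall strategy is the same as the paper's: everything is verified directly from the explicit constructions, with the nontrivial products pinned down by evaluating on the generators $w$, $v_{-1}\otimes v_{\pm 1}$, $v_{-1}\otimes v_{-1}$; your socle/radical bookkeeping is just a more systematic version of the paper's ``follows from the definitions'' steps. Two points need attention. First, your justification of $\psi^2_{i,j}\psi^2_{i,i}=0$ for $i\neq j$ is wrong as stated: you claim $Z(j)$ has no submodule isomorphic to $L(i)$, but by the two displayed short exact sequences $\mathrm{soc}(Z(0))\cong L(-2)$ and $\mathrm{soc}(Z(-2))\cong L(0)$, i.e.\ $\mathrm{soc}(Z(j))\cong L(i)$ exactly when $i\neq j$. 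Moreover, knowing that the image of $L(i)=\mathrm{im}\,\psi^2_{i,i}$ dies in the quotient $Z(j)$ only places $\psi^2_{i,j}(L(i))$ inside the submodule $Z(i)\subset P(j)$, not at zero. The conclusion is correct and easily repaired: any homomorphism $L(i)\to P(j)$ lands in $\mathrm{soc}(P(j))=L(j)\not\cong L(i)$ and hence vanishes; alternatively, argue as the paper does and deduce it from the relations you have already established, e.g.\ $\psi^2_{i,j}\psi^2_{i,i}=\psi^2_{i,j}\psi^2_{j,i}\psi^1_{i,j}=\psi^2_{j,j}\psi^1_{i,j}=0$.

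Second, the step you flag as the ``main obstacle'' --- showing the scalar in $\psi^k_{i,j}\psi^{k'}_{j,i}=c\,\psi^2_{j,j}$ equals $1$ --- is precisely what the paper settles, and it is short: the mixed products $\psi^1_{i,j}\psi^2_{j,i}$ and $\psi^2_{i,j}\psi^1_{j,i}$ come straight from the definitions (the lift $\psi^2_{i,j}$ composed with the projection to $Z(j)$ is by construction the head-to-socle map), while for $\psi^2_{i,j}\psi^2_{j,i}$ one evaluates on a single generator: $\psi^2_{0,-2}\psi^2_{-2,0}(v_{-1}\otimes v_-)=\psi^2_{0,-2}(Fw)=F(v_{-1}\otimes v_+)=\psi^2_{-2,-2}(v_{-1}\otimes v_-)$ and $\psi^2_{-2,0}\psi^2_{0,-2}(w)=\psi^2_{-2,0}(v_{-1}\otimes v_+)=F^{p-1}(v_{-1}\otimes v_{-1})=\psi^2_{0,0}(w)$, using exactly the normalizations you quote. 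Until this evaluation is actually carried out, your argument only gives the third bullet up to nonzero scalars, so you should include it (it is a two-line check, not an obstacle).
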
 \begin{proof} The first relation, and the first part of the second relation, are obvious. The first part of the third relation follows using the definition of the maps $\psi_{i,j}^1, \psi_{j,i}^1$, and the fact that the compositions $Z(0) \rightarrow P(-2) \rightarrow Z(-2)$ and $Z(-2) \rightarrow P(0) \rightarrow Z(0)$ are zero; the same argument implies that $\psi_{j,j}^2 \psi_{i,j}^1 = \psi_{i,j}^1 \psi_{i,i}^2 = 0$ if $i \neq j$, and $(\psi_{i,i}^2)^2=0$. 

For the second part of the third relation, first note that $\psi_{i,j}^1 \psi_{j,i}^2 = \psi_{j,j}^2$ follows from the definitions. The following calculations imply that $\psi_{0,-2}^2 \psi_{-2,0}^2 = \psi_{-2,-2}^2$ and $\psi_{-2,0}^2 \psi_{0,-2}^2 = \psi_{0,0}^2$; one can similarly verify that $\psi_{i,j}^2 \psi_{j,i}^1 = \psi_{j,j}^2$.
\begin{align*} \psi_{0,-2}^2 \psi_{-2,0}^2 (v_{-1} \otimes v_-) &= \psi_{0,-2}^2 (Fw) = F(v_{-1} \otimes v_+) = \psi_{-2,-2}^2 (v_{-1} \otimes v_-) \\ \psi_{-2,0}^2 \psi_{0,-2}^2 (w) &= \psi_{-2,0}^2 (v_{-1} \otimes v_+) = F^{p-1} (v_{-1} \otimes v_{-1}) = \psi_{0,0}^2(w) 
\end{align*} To complete the proof, it remains to finish the second part of the second relation, {\it i.e.} that when $i \neq j$, $\psi_{j,j}^2 \psi_{i,j}^2 = \psi_{i,j}^2 \psi_{i,i}^2 = 0$. This follows from the other relations: for instance, $\psi_{i,j}^2 \psi_{i,i}^2 = \psi_{i,j}^2 \psi_{j,i}^2 \psi_{i,j}^1 = \psi_{j,j}^2 \psi_{i,j}^1 = 0$ (alternatively, one may use the techniques from the previous paragraph). \end{proof}

\begin{example}
Via \S~\ref{thmBproof}, this algebra has a grading that satisfies the Koszul property, which we briefly describe here. This grading is constructed from the derived equivalence with the category of coherent sheaves on $T^*\bbP^1$, with the equivalence given by a tilting vector bundle, which in this case is given explicitly in \cite[Example~1.10.6]{bmr2}. More precisely,  consider the tilting object $\pi^*(\calO\oplus\calO(1))$ with $\pi:T^*\bbP^1\to \bbP^1$ being the projection. The endomorphism algebra of this tilting object can be found in \cite[Example~10.7]{WZ} given by the following quiver with relations
$$\xymatrix{
\bullet_0\ar@/^/@<1ex>[r]^{\alpha_0}\ar@/^/@<3ex>[r]^{\alpha_1}&
\bullet_1\ar@/^/@<1ex>[l]^{\beta_0}\ar@/^/@<3ex>[l]^{\beta_1} }$$
relations: $\alpha_0\beta_1=\alpha_1\beta_0;$
$\beta_0\alpha_1=\beta_1\alpha_0.$
The functor $R\Hom_{T^*\bbP^1}(\pi^*(\calO\oplus\calO(1)), -)$ induces a derived equivalence between modules over the path algebra of the above quiver with relations with $D^b\Coh(T^*\bbP^1)$. Under this equivalence, the abelian subcategory of $D^b\Coh(T^*\bbP^1)$ corresponding to the abelian category of modules over the path algebra is the category of exotic coherent sheaves. The simple objects in this  category of exotic coherent sheaves are $i_*\calO$ and $i_*\calO(-1)[1]$, which correspond respectively to the two vertices of the quiver. The weight 2 $\bbC^*$-action on the cotangent fibers gives a grading of $\End_{T^*\bbP^1}(\pi^*(\calO\oplus\calO(1)))$. 
In terms of quiver with relations, the grading is given such that $\alpha_i$ has degree zero and $\beta_i$ has degree $2$ for $i=0,1$.  
The presentation of the quiver with relation is so that the arrows from the simple object $S_i$ to $S_j$ in the quiver form a basis of $\Ext^1(S_j,S_i)^*$, and the relations form a basis of $\Ext^2(S_j,S_i)^*$ for $i,j=0,1$  \cite[Proposition~7.3]{WZ}. For dimensional reasons, there are no Ext's in degree higher than 2. 
This endomorphism algebra of the tilting object is Koszul dual to the 8-dimensional algebra $A$ above. 
The grading on $A$ is the homological grading of the Ext-algebra of the simple objects, that is, the arrows are in degree 1, and the relations are degree 2.  
\end{example}

Recall that given two rings $R$ and $S$, and an $(S, R)$-bimodule $M$, we have a functor $F_M: R-\text{mod} \rightarrow S-\text{mod}$, given by $F_M(P) = M \otimes_R P$. The adjoint functor $\overline{F_M}: S-\text{mod} \rightarrow R-\text{mod}$ is given by the $(R, S)$-bimodule $M' = \text{Hom}_{R}(M, R)$.

\begin{lemma} This categorification of the $\mathfrak{sl}_2(\mathbb{C})$-action on $\mathbb{C}^2 \otimes \mathbb{C}^2$ uses the three categories $\textbf{k}-\text{mod}$, $A-\text{mod}$ and $\textbf{k}-\text{mod}$, with functors between them: \begin{align*} E_{-1}: \textbf{k}-\text{mod} \rightarrow A-\text{mod}, \; F_{-1}: A-\text{mod} \rightarrow \textbf{k}-\text{mod} \\ E_1: A-\text{mod} \rightarrow \textbf{k}-\text{mod}, \; F_{1}: \textbf{k}-\text{mod} \rightarrow A-\text{mod} \end{align*} The functor $E_{-1}: \textbf{k}-\text{mod} \rightarrow \text{A}-\text{mod}$ corresponds to the $(A, k)$-bimodule $A \psi_{-2,-2}^1$, the functor $F_{-1}: \text{A}-\text{mod} \rightarrow \textbf{k}-\text{mod}$ corresponds to the $(k, A)$-bimodule $\psi_{-2,-2}^1 A$, and $F_1 \simeq E_{-1}, E_1 \simeq F_{-1}$. \end{lemma}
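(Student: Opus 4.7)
The plan is to identify each of the three categories and each of the four translation functors up to isomorphism by computing on a generator, then invoke Eilenberg--Watts to read off the representing bimodules, and finally use the symmetry $(\bfk^2)^\ast \simeq \bfk^2$ of the defining $\fs\fl_2$-representation to match up the pairs $(F_1, E_{-1})$ and $(E_1, F_{-1})$.

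First I would identify the source categories. Since $n=2$, the three relevant blocks are $\calC_{-2},\calC_0,\calC_2$ with central characters $\mu_0,\mu_1,\mu_2$. After projecting to the $\fs\fl_2$ Cartan one has $\mu_0=\mu_2=-\rho$, so $\calC_{-2}$ and $\calC_2$ are literally the same singular (Steinberg) block, whose unique simple is the $p$-dimensional module $L(-1)=Z(-1)$; both are semisimple and canonically identified with $\bfk\text{-mod}$. The principal block $\calC_0$ has the two simples $L(0),L(-2)$ and, as established in the discussion preceding the lemma, $P(0)\oplus P(-2)$ is a projective generator, so $\calC_0\simeq A\text{-mod}$, under which $P(-2)$ corresponds to the left ideal $A\psi^1_{-2,-2}=Ae_{-2}$ for the primitive idempotent $e_{-2}=\psi^1_{-2,-2}$.

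Next I would pin down the functor $\bfE_{-1}$ by evaluating on the generator $Z(-1)$. The module $Z(-1)\otimes\bfk^2$ already lives in $\calC_0$ (both of its weight components $0,-2$ lie in that block), so no projection is needed and $\bfE_{-1}(Z(-1)) = Z(-1)\otimes \bfk^2$. Since $Z(-1)$ is projective in $\calC_{-2}$ and $\bfk^2$ is finite-dimensional, the tensor product is projective of dimension $2p$. Applying the tensor identity $U_0\fg\otimes_{U_0\fb}(\bfk_{-1}\otimes\bfk^2)$ to the $\fb$-filtration of $\bfk^2$ gives a short exact sequence $0\to Z(0)\to Z(-1)\otimes\bfk^2\to Z(-2)\to 0$, so the head is $L(-2)$; comparing with the two indecomposable projectives forces $\bfE_{-1}(Z(-1))\simeq P(-2)$, as already noted in the excerpt. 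Because $\bfE_{-1}$ is exact and $\bfk$-linear out of $\bfk\text{-mod}$, the Eilenberg--Watts theorem gives $\bfE_{-1}\simeq \bfE_{-1}(\bfk)\otimes_{\bfk}-\simeq Ae_{-2}\otimes_{\bfk}-$, which is the claimed bimodule. The translation functors $\bfE_{-1}$ and $\bfF_{-1}$ are biadjoint, since tensoring with $\bfk^2$ is biadjoint to tensoring with $(\bfk^2)^\ast$ and projection onto a block is biadjoint to inclusion; taking the right adjoint of $Ae_{-2}\otimes_{\bfk}-$ yields $\Hom_A(Ae_{-2},-)\simeq e_{-2}(-)$, which is represented by the $(\bfk,A)$-bimodule $e_{-2}A=\psi^1_{-2,-2}A$.

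Finally, for $F_1\simeq E_{-1}$ and $E_1\simeq F_{-1}$, I would use two facts specific to $n=2$: the standard representation is self-dual, $(\bfk^2)^\ast\simeq\bfk^2$ (via the symplectic form), and the blocks $\calC_{-2},\calC_2$ have been identified above. Under these identifications the defining formulas in \eqref{eqn:trans_func} for $\bfF_1$ and $\bfE_{-1}$ become the same functor up to isomorphism, and likewise for $\bfE_1$ and $\bfF_{-1}$. The main obstacle in the whole argument is the explicit identification $Z(-1)\otimes\bfk^2\simeq P(-2)$: everything else is a formal consequence of Eilenberg--Watts and adjunction, but this isomorphism relies on the tensor identity for baby Vermas together with the classification of indecomposable projectives in the principal block of $U_0\fs\fl_2$, which is the single genuinely representation-theoretic input.
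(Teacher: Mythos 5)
Your proposal is correct and follows essentially the same route as the paper: evaluate $E_{-1}$ on the generator of $\textbf{k}\text{-mod}$, identify the image $Z(-1)\otimes\textbf{k}^2 \simeq P(-2)$ with the left ideal $A\psi^1_{-2,-2}$ under the Morita equivalence, and then obtain $F_{-1}$ as the adjoint via the bimodule fact, with the remaining identifications coming from the coincidence of the outer (Steinberg) blocks and self-duality of $\textbf{k}^2$. The only difference is that you re-derive $Z(-1)\otimes\textbf{k}^2\simeq P(-2)$ from the tensor identity and spell out Eilenberg--Watts, whereas the paper takes these from the discussion preceding the lemma.
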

\begin{proof} The functor $E_{-1}$ sends the $1$-dimensional vector space to the projective (left) $A$-module $A \psi_{-2,-2}^1$, so the first part is clear. Using the above fact about adjoints, the functor $F_{-1}$ corresponds to the (right) $A$-module $\text{Hom}_A( A\psi_{-2,-2}^1, A)$, which is isomorphic to $\psi_{-2,-2}^1 A$. \end{proof}


Recall from Remark \ref{K0} that although Theorem A implies the existence of an isomorphism below, as $\mathfrak{sl}_2$-modules, it does not give us an explicit identification. $$\bigoplus_{k=0}^n K^0(\mathcal{C}_{-n+2k}) \simeq (\mathbb{C}^2)^{\otimes n}$$ 

Below we construct such an identification when $n=2$, and describe the basis in the $\mathfrak{sl}_2$-representation $\mathbb{C}^2 \otimes \mathbb{C}^2 = V_{-2} \oplus V_0 \oplus V_2$ coming from the classes of the irreducible objects. Define $w_{-2} = [Z(-1)] \in V_{-2}$, $w_{0}^1 = [L_0], w_0^{2} = [L_s]$ and $w_2 = [Z(-1)] \in V_2$. Then: \begin{align*} 
E w_{-2} &= [Z(-1) \otimes \textbf{k}^2] = 2(w_0^1 + w_0^2) \\ F w_0^1 &= [L_0 \otimes \textbf{k}^2]|_{\mu_1} = 0, F w_0^2 = [L_s \otimes \textbf{k}^2]|_{\mu_1} = w_{-2} \qquad \\ E w_0^1 &= 0, E w_0^2 = w_2, Fw_2 = 2(w_0^1 + w_0^2) \end{align*}  
The above computation allows us to identify $ K^0(\mathcal{C}_{-2} \oplus \mathcal{C}_0 \oplus \mathcal{C}_2) $ with $\mathbb{C}^2 \otimes \mathbb{C}^2$, as $\mathfrak{sl}_2$-representations, so that: $$ w_{-2} \mapsto v_0 \otimes v_0; w_0^1 \mapsto v_0 \otimes v_1 - v_1 \otimes v_0, w_0^2 \mapsto v_0 \otimes v_1; w_2 \mapsto v_1 \otimes v_1 $$ Under this identification, the four simple objects are mapped to the images of the dual canonical bases for the $U_q(\mathfrak{sl}_2)$-representation, specialized at $q=1$ (see the description in \cite[\S~5.2]{fks}). 

\section{A graded lift and equivalence to a geometric categorification}\label{thmBproof}
Here we state and prove a more precise version of Theorem~\ref{IntrB}, the second main result of this paper.  In Sections~\ref{richebackground} and \ref{subsec:Localization} we give a purely expository overview of linear Koszul duality, and Riche's localization results (building on work of Bezrukavnikov, Mirkovi\'c, and Rumynin, \cite{bmr}); these will be used in later sections, and the reader is encouraged to return to it when needed. Throughout this section, we impose the condition that $\text{char}(\textbf{k}) = p >n$. 

\subsection{Linear Koszul duality} \label{richebackground} Here all scheme theoretic objects are defined over the field $\textbf{k}$. 
We omit the definition of a \textit{dg-scheme} $X$, and the category $\text{DGCoh}(X)$ of \textit{dg-coherent sheaves} on this space \cite[\S~1.8]{riche}. 
Given a vector bundle $F$ over $X$ with sheaf of sections $\mathcal{F}$ we consider a coherent sheaf of dg-algebras  $\mathcal{S} = \mathcal{S}_{\mathcal{O}_X}(\mathcal{F}^{\vee})$, endowed with an internal grading so that $\mathcal{F}^{\vee}$ is of bidegree $(2, -2)$ \cite[\S~2.3]{riche}. Here we use bidegree $(i, j)$ to denote the homogeneous piece of  cohomological degree $i$
and internal degree $j$.
The category $\text{DGCoh}^{\text{gr}}(F)$ is the derived category of  bounded above complexes of $\textbf{G}_m$-dg-modules over $\mathcal{S}$ with cohomology that is quasi-coherent, and locally finitely generated \cite[\S~2.3]{riche}. On this category, we have two shifting functors, $[\ ]$ and $\{\}$, corresponding respectively to the shift in cohomological and internal degrees. 

We similarly consider the coherent sheaf of dg-algebras $\mathcal{R} = \mathcal{S}_{\mathcal{O}_X}(\mathcal{F}^{\vee})$, endowed with an internal grading so that $\mathcal{F}^{\vee}$ is of bidegree $(0, -2)$. 
We use $\Gm$ to denote the multiplicative group. Here the subindex $m$ stands for multiplicative. Recall that $\Gm$-equivariance structure is the same as grading. 
Note that the corresponding derived category of bounded above complexes of $\Gm$-dg-modules over $\mathcal{R}$ is naturally equivalent to $D^b\Coh_{\Gm}(F)$, the derived category of equivariant coherent sheaves on $F$ \cite[Lemma~2.3.2]{riche}. Under these conventions,  Riche  \cite[(2.3.5)]{riche} (see also \cite[(1.1.2)]{Rich2}) describes a ``regrading'' equivalence $\xi$, sending $\calM_q^p$ to $\calM_q^{p-q}$: $$\xi: \text{DGCoh}^{\gr}(T^*\calP)\cong D^b \Coh_{\Gm}(T^*\calP)$$

Now we work in the setup of Section~\ref{modularrep}.  Let $P$ be a parabolic subgroup, and let $\mu$ be a weight lying only on those reflection hyperplanes corresponding to the parabolic $P$. Let $\calP=G/P$ be the corresponding partial flag variety, $T^* \calP$ be the cotangent bundle, $\mathfrak{p}$ the corresponding parabolic Lie algebra, and $\mathfrak{u}$ its unipotent radical. Define: $$ \widetilde{\mathfrak{g}}_{\calP} = \{ (X, gP) \in \mathfrak{g}^* \times \calP \; | \; X|_{g. \mathfrak{u}} = 0 \} $$  

We consider the dg-scheme $\widetilde{\fg}_\calP\times_\fg\{0\}$, which is the derived fiber product; let $\text{DGCoh}^{\gr}(\widetilde{\fg}_\calP\times_\fg\{0\})$ be the derived category of $\textbf{G}_m$-dg-modules.
\begin{lemma}[\cite{riche}, Section 10.1]
 \label{Lem:Koszul}
Linear Koszul duality gives an equivalence of categories: \[\kappa:\text{DGCoh}^{\gr}(\widetilde{\fg}_\calP\times_\fg\{0\})\cong \text{DGCoh}^{\gr}(T^*\calP)\] 
\end{lemma}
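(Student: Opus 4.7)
The plan is to realize both sides of the equivalence as derived intersections of pairs of subbundles living in two dual vector bundles, and then invoke the linear Koszul duality theorem of Mirkovi\'c and Riche (whence the attribution to Section 10.1 of \cite{riche}). First, I would view $\widetilde{\fg}_{\calP}$ as a subbundle $V_{1}$ of the trivial vector bundle $E := \fg^{*}\times\calP\to\calP$, whose fiber over $gP$ is the linear subspace $(g\cdot\fu)^{\perp}\subset\fg^{*}$, together with the zero section $V_{2}:=\calP\times\{0\}$ viewed as a second subbundle. The derived intersection $V_{1}\cap^{R}_{E}V_{2}$ computed inside $E$ is then literally the derived fiber product $\widetilde{\fg}_{\calP}\times^{R}_{\fg^{*}}\{0\}$ appearing on the left-hand side of the lemma.

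Next, I would compute the Koszul-dual picture inside the dual bundle $E^{\vee}=\fg\times\calP$. The annihilator $V_{1}^{\perp}\subset E^{\vee}$ has fiber $g\cdot\fu\subset\fg$ over $gP$; this is the nilpotent parabolic Springer variety $\widetilde{\calN}_{\calP}$, which under the $G$-equivariant Killing-form identification $\fg\cong\fg^{*}$ is canonically isomorphic to $T^{*}\calP$. Since $V_{2}^{\perp}=E^{\vee}$ is the full dual bundle, the derived intersection $V_{1}^{\perp}\cap^{R}_{E^{\vee}}V_{2}^{\perp}$ collapses to $V_{1}^{\perp}\cong T^{*}\calP$, with no derived correction.

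With both sides so described, the equivalence is immediate from the general linear Koszul duality statement
\[\kappa:\mathrm{DGCoh}^{\mathrm{gr}}(V_{1}\cap^{R}_{E}V_{2})\;\simeq\;\mathrm{DGCoh}^{\mathrm{gr}}(V_{1}^{\perp}\cap^{R}_{E^{\vee}}V_{2}^{\perp})\]
available for any pair of subbundles of a $\bbG_{m}$-equivariant vector bundle. Granted the abstract theorem, the main obstacle is purely bookkeeping: one must verify that the $\bbG_{m}$-weights placed on the fibers (bidegree $(2,-2)$, as in \cite{riche}) match on both sides after dualization, and that the Koszul resolution of the zero section used to model the derived intersection $V_{1}\cap^{R}_{E}V_{2}$ is compatible with Riche's conventions for $\mathrm{DGCoh}^{\mathrm{gr}}$. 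Once the bigrading conventions are aligned, the equivalence $\kappa$ is a direct specialization of the general machinery.
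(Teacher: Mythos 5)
Your proposal is correct and is essentially the paper's own argument: both simply invoke Riche's linear Koszul duality theorem (\cite[Theorem~2.3.10]{riche}, i.e.\ the Mirkovi\'c--Riche duality for subbundles of $E=\fg^*\times\calP$), with the zero section as the second subbundle so that the dual side has no derived correction. The only cosmetic difference is which subbundle plays the role of $F$: you take $F=\widetilde{\fg}_\calP$ and identify $F^\perp\subseteq\fg\times\calP$ (fiber $g\cdot\fu$) with $T^*\calP$ via the invariant form, whereas the paper takes $F=T^*\calP$ and identifies $F^\perp$ with $\widetilde{\fg}_\calP$ --- the same identification used in the opposite direction.
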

\begin{proof}
This follows from \cite[Theorem~2.3.10]{riche}, where we take $F=T^*\calP$ and  $E=\fg^*\times\calP$. Then $F^\perp\subseteq E^*$ is isomorphic to $\widetilde{\fg}_\calP$.
\end{proof}

\subsection{Riche's localization results}\label{subsec:Localization}
We have an equivalence \cite[Theorem~3.4.14]{riche}
\begin{equation}
\label{eqn:gamma}
 \tilde{\gamma}_{\mu}^{\mathcal{P}}:  D^b\text{Mod}^{\text{fg}}_{\mu,0} (\textbf{U} \fg) \simeq \text{DGCoh}(\tilde{\fg}_{\mathcal{P}}\times_{\fg}\{0\}).
\end{equation}
By $\tilde{\fg}_{\mathcal{P}}\times_{\fg}\{0\}$, we mean the dg-scheme $\tilde{\fg}_{\mathcal{P}} \cap^R_{\mathfrak{g}^* \times \mathcal{P}} \mathcal{P}$    \cite[Definition 1.8.3]{riche}.

The equivalence~\eqref{eqn:gamma} has a graded version \cite[Theorem 10.3.1]{riche} (see also \cite[Theorem~1.6.7]{bm}). 
More precisely, the algebra $\textbf{U}\fg$ has a suitable completion which admits a Koszul grading;
In particular, the category of finitely generated graded modules $\text{Mod}^{\text{fg, gr}}_{\mu,0} (\textbf{U}\fg)$  \cite[\S~10.2]{riche} is endowed with a derived equivalence
 \[\tilde{\gamma}_{\mu}^{\mathcal{P}}:  D^b\text{Mod}^{\text{fg,gr}}_{\mu,0} (\textbf{U} \fg) \simeq \text{DGCoh}^{\text{gr}}(\tilde{\fg}_{\mathcal{P}}\times_{\fg}\{0\})\] and a forgetful functor 
$Forg: \text{Mod}^{\text{fg,gr}}_{\mu,0} (\textbf{U} \fg)\to \text{Mod}^{\text{fg}}_{\mu,0} (\textbf{U} \fg)$ so that the following diagram commutes 
\[\xymatrix{
D^b\text{Mod}^{\text{fg, gr}}_{0, \mu}(\textbf{U} \mathfrak{g}) \ar[d]^{Forg}\ar[r]_{\cong}& \text{DGCoh}^{\gr}(\widetilde{\fg}_{\calP}\times_{\fg}\{0\})\ar[d]_{Forg}\\
D^b\text{Mod}^{fg}_{0, \mu}(\textbf{U} \mathfrak{g})\ar[r]_{\cong}& \text{DGCoh}(\widetilde{\fg}_{\calP}\times_{\fg}\{0\}) }\]

The composition $\xi\circ\kappa\circ\tilde\gamma^\calP_\mu$ gives the following equivalence
\begin{equation}\label{localization}
D^b\text{Mod}^{\text{fg, gr}}_{\mu,0} (\textbf{U}\fg) \simeq \text{DGCoh}^{\text{gr}}(\tilde{\mathfrak{g}}_{\mathcal{P}}\times_{\fg}\{0\}) \simeq \text{DGCoh}^{gr}(T^* \mathcal{P})\simeq D^b\Coh^{\mathbb{G}_m}(T^*\calP).
\end{equation}

Let $P\subseteq Q \subseteq G$ be two parabolic subgroups, and $\mu$, $\nu\in\Lambda$ be weights which respectively  lie only on those reflection hyperplanes corresponding to $\calP$ and $\calQ$.  We have the natural map $\tilde \pi_{\calP}^{\calQ}: \widetilde\fg_\calP\times_\fg\{0\} \rightarrow \widetilde\fg_\calQ\times_\fg\{0\}$.  The functors on derived categories of coherent sheaves are denoted by  
\begin{align*} 
R\tilde\pi_{\calP*}^{\calQ}: \text{DGCoh}^{\gr}(\widetilde\fg_\calP\times_\fg\{0\}) \to \text{DGCoh}^{\gr}(\widetilde\fg_\calQ\times_\fg\{0\}) \\ L\tilde\pi_{\calP}^{\calQ*}: \text{DGCoh}^{\gr}(\widetilde\fg_\calQ\times_\fg\{0\}) \to \text{DGCoh}^{\gr}(\widetilde\fg_\calP\times_\fg\{0\}) 
\end{align*} 

Translation functors between the ungraded categories defined as in \eqref{eqn:trans_func} have lifts. More precisely,  we have functors  $$T_{\mu}^{\nu}: \text{Mod}^{\text{fg, gr}}_{\mu, 0}(\textbf{U} \mathfrak{g}) \rightarrow \text{Mod}^{\text{fg, gr}}_{\nu, 0}(\textbf{U} \mathfrak{g}) \qquad T_{\nu}^{\mu}: \text{Mod}^{\text{fg, gr}}_{\nu, 0}(\textbf{U} \mathfrak{g}) \rightarrow \text{Mod}^{\text{fg, gr}}_{\mu, 0}(\textbf{U} \mathfrak{g})$$
satisfying the commutativity conditions
\begin{equation}\label{geomtrans}
T_\mu^\nu\circ\tilde\gamma^\calP_{\mu} \cong \tilde\gamma^\calQ_{\nu}\circ R\tilde\pi_{\calQ*}^{\calP}\hbox{ and }T^\mu_\nu\circ\tilde\gamma^\calQ_{\nu}\cong\tilde\gamma^\calP_{\mu}\circ L\tilde\pi_\calQ^{\calP*}.
\end{equation}
One sees that  \cite[Proof of Proposition~5.4.3]{riche} after forgetting the grading, these correspond to the translation functors \eqref{eqn:trans_func}; In particular, they are well-defined on the abelian categories albeit  a priori only defined on the derived categories.


\subsection{Statement of Theorem~\ref{IntrB}} \label{statementB} 
In what follows, we are primarily interested in the special case of Sections~\ref{richebackground} and \ref{subsec:Localization} when $G=SL_n$, $\mu=\mu_r={-\rho+e_1+\cdots+e_r}$, $r=1,\dots,n$.
The singular weight $\mu_r$ corresponds to the parabolic subgroup $P_r$  stabilizing the flag $$ 0 \subset \textbf{k} \{ e_1, \cdots, e_r \} \subset \textbf{k}^n .$$
The corresponding partial flag variety is $\calP_r=G/P_r=\Gr(r,n)$, the Grassmanian of $r$-dimensional vector spaces in $\textbf{k}^n$. 

Our main result is that the above categorification admits a graded lift, which is equivalent to that constructed by Cautis, Kamnitzer, and Licata \cite{ckl}. Let us start by recalling the set-up used there. Note that: $$ T^* \Gr(r,n) = \{ (V,X)\mid 0 \subset V \subset \textbf{k}^n,\ X\in\End_{\textbf{k}}(\textbf{k}^n), \dim V=r, XV=0, X(\textbf{k}^n) \subset V \} $$ The multiplicative group $\Gm$ of the field $\textbf{k}$ acts on $T^* \Gr(r,n)$ by dilation on the fibers: $t \cdot X = t^2 X$. 
For each $0 \leq r \leq n$, define the bounded derived categories of equivariant coherent sheaves on these varieties: $$D(-n+2r)=D^{b}\text{Coh}_{\Gm}(T^*\Gr(r,n))$$ Let $W \subset T^*\Gr(r,n) \times T^*\Gr(r+1,n)$ be the Lagrangian correspondence \[ \{ (0 \subset V \subsetneq V' \subset \textbf{k}^n) \mid \dim V=r,\ \dim V'=r+1, \\  X V' = 0, X(\textbf{k}^n) \subset V \} \] Let us define the functors $\textbf{E}(-n+2r+1), \textbf{F}(-n+2r+1)$ via the following Fourier-Mukai kernels $\calE(-n+2r+1), \calF(-n+2r+1) \in D^b \text{Coh}_{\Gm}(T^*\Gr(r,n) \times T^*\Gr(r+1,n))$. 
\begin{align} \label{eqn:CKL}
\textbf{E}(-n+2r+1): \; & D(-n+2r) \to D(-n+2r+2) \\ \qquad \calE(-n+2r+1) &= \mathcal{O}_W \otimes \det(V')^{n-2r-1} \det(V)^{-(n-2r-1)} \{ n - r - 1 \}\notag \\
\textbf{F}(-n+2r+1): \; & D(-n+2r+2) \to D(-n+2r) \\ \calF(-n+2r+1) &= \mathcal{O}_W \otimes \det(V')\det(V) \{ r \} \notag
\end{align}
These functors can alternatively be expressed as pull-push functors in the derived sense, similar to Lemma~\ref{lem:derFM}. For example, $\textbf{E}(-n+2r+1)$ corresponds to pulling back using the projection $W \rightarrow T^* \Gr(r,n)$, tensoring with a line bundle, and pushing forward under the projection $W \rightarrow T^* \Gr(r+1,n)$, with $W$ endowed with a structure as the derived fiber product.

Cautis, Kamnitzer, and Licata prove the following \cite[Theorem 2.5]{ckl}. 

\begin{theorem}[\cite{ckl}] \label{thm:CKL}
The above categories $D(-n+2r)$ and the functors $\textbf{E}(-n+2r+1), \textbf{F}(-n+2r+1)$ give a geometric categorical $\mathfrak{sl}_2$-action. In particular, there exists an octuple of natural transforms $(i, \pi, \epsilon_1,\epsilon_2, \eta_1,\eta_2, \hat X, \hat T)$ satisfying the conditions of a strong categorical $\mathfrak{sl}_2$-action. \end{theorem}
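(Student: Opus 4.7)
The plan is to verify the axioms of a \emph{geometric categorical $\mathfrak{sl}_2$-action} for the data $(D(-n+2r), \calE, \calF, \ldots)$ on the cotangent bundles $T^*\textbf{Gr}(r,n)$, and then invoke the general machinery of \cite{ckl} that turns such data into a strong categorical $\mathfrak{sl}_2$-action (hence Definition~\ref{2LieAlgebra}). This divides the work into (i) biadjointness of the kernels, (ii) the categorified $\mathfrak{sl}_2$-commutator relation, and (iii) construction of the structure morphisms.

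First I would verify that $\calF(-n+2r+1)$ is both a left and right adjoint of $\calE(-n+2r+1)$, up to the appropriate grading shifts predicted by the $\mathfrak{sl}_2$-weight. Since both kernels are supported on the same correspondence $W$, computing adjoints reduces to identifying the relative dualizing sheaf of the projections $W \to T^*\textbf{Gr}(r,n)$ and $W \to T^*\textbf{Gr}(r+1,n)$ in terms of $\det(V)$, $\det(V'/V)$, and $\det(\bbk^n/V')$ by Grothendieck-Serre duality. The twists $\det(V')^{n-2r-1}\det(V)^{-(n-2r-1)}\{n-r-1\}$ and $\det(V')\det(V)\{r\}$ in \eqref{eqn:CKL} are calibrated precisely so that the left and right adjoints of $\calE$ coincide with $\calF$ after the expected cohomological and $\Gm$-weight shifts.

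The heart of the argument is the categorified commutator, which asserts in kernel form that $\calE*\calF$ and $\calF*\calE$ fit into a distinguished triangle with $\bigoplus_{[|n-2r-1|]} \calO_\Delta$, with shifts. The plan is to compute the convolution by pulling back to the fiber product over $T^*\textbf{Gr}(r+1,n)$, respectively $T^*\textbf{Gr}(r,n)$. Each fiber product decomposes into an ``expected'' irreducible component (where two subspaces of $\bbk^n$ agree) and an ``exotic'' component (a projective bundle over a correspondence in $T^*\textbf{Gr}(r,n)\times T^*\textbf{Gr}(r,n)$); however, these intersections fail to be transverse. Thus one must resolve $\calO_W\otimes^L \calO_{W'}$ by a Koszul complex built from the excess normal bundle and apply the projection formula term by term. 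Tracking the Chern roots of the tautological bundles $V$ and $V'$ against the chosen grading shifts yields the desired triangle. This step is the main obstacle: the combinatorics of the Koszul filtration interact delicately with the line-bundle twists, and one must show that the ``error terms'' coming from the Tor-sheaves of the non-transverse intersection recombine into exactly $|n-2r-1|$ copies of $\calO_\Delta$ with the predicted grading degrees.

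Once the commutator is settled, the remaining data is constructed by tautological means. The divided-power functor $\calE^{(k)}$ has kernel given by the correspondence parametrizing step flags $V\subset V_1\subset\cdots\subset V_k\subset \bbk^n$ twisted by a canonical line bundle; the natural transformations $\iota$ and $\pi$ come from the inclusion of this partial-flag correspondence into the iterated one and the induced splittings of tautological bundles. The endomorphism $\hat X(r)$ of $\calE(r)$ is induced by multiplication with $c_1(\det(V'/V))$ on $W$, and the ``braiding'' $\hat T(r)$ on $\calE(r{+}1)\calE(r{-}1)$ arises from the natural $\mathfrak{S}_2$-action swapping the two intermediate subspaces in $W\times_{T^*\textbf{Gr}(r+1,n)} W'$. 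The units and counits $\eta$, $\epsilon$ are the adjunction morphisms from step (i). Verifying the remaining compatibilities of Definition~\ref{2LieAlgebra}, such as nilpotency of $\hat X$, the affine Hecke relation between $\hat X$ and $\hat T$, and the braid identity for $\hat T$, then reduces by standard arguments to the explicit tautological nature of the Chern-class and swap morphisms, together with the commutator relation proved above.
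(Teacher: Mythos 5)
Before reviewing the mathematics: in the paper this statement is not proved at all — it is imported verbatim from \cite[Theorem 2.5]{ckl} (with only the remark that the argument goes through over an algebraically closed field of characteristic $p \gg 0$). So your proposal is in effect an attempt to redo Cautis--Kamnitzer--Licata's proof, and it does follow the right overall shape (check the axioms of a geometric categorical $\mathfrak{sl}_2$-action for the kernels on $T^*\textbf{Gr}(r,n)$, then invoke the general theorem upgrading this to a strong categorical action). Measured against what CKL actually do, however, there are two genuine gaps.

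The main one concerns the ``in particular'' clause, i.e.\ the existence of $\hat X$ and $\hat T$. In \cite{ckl} these transforms are \emph{not} constructed by hand: their existence (parametrized as recalled in Remark~\ref{xt}) is the content of the passage from a geometric to a strong categorical $\mathfrak{sl}_2$-action, and that passage consumes precisely the axioms your sketch omits — the deformation condition (the kernels $\calE(-n+2r+1)$ must extend flatly over the natural one-parameter deformation of $T^*\textbf{Gr}(r,n)$ coming from $\widetilde{\fg}_{\calP}$) and the Hom-space/formality conditions (e.g.\ one-dimensionality of $\operatorname{End}(\calE)$ in the relevant degrees). Your proposed direct substitutes do not close this gap: taking $\hat X$ to be multiplication by $c_1(\det(V'/V))$ is reasonable, but $\hat T$ ``from the natural $\mathfrak{S}_2$-action swapping the two intermediate subspaces'' does not exist — the kernel of $E(r+1)E(r-1)$ is supported on flags $V \subset V' \subset V''$ with a \emph{single} intermediate term, and there is no natural involution on this space; the nil-affine-Hecke endomorphism instead comes from the relative $\mathbb{P}^1$-bundle structure of the iterated correspondence over the divided-power correspondence, and verifying the nilHecke relations for such a hand-made $T$ is exactly the difficulty the geometric formalism of \cite{ckl} was designed to avoid. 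A secondary point: the commutator axiom in the definition of a geometric categorical $\mathfrak{sl}_2$-action is an \emph{isomorphism} of kernels, $\calE * \calF \cong \calF * \calE \oplus \bigl(\text{shifted copies of } \calO_\Delta\bigr)$ (in the direction dictated by the weight), not merely a distinguished triangle; producing the splitting (which uses the Hom-vanishing conditions) is part of the work, so the step as you state it proves less than what is needed. If you intend to reprove the theorem rather than cite it, these are the points to repair; within the present paper, a citation is all that is required.
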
 

\begin{remark} 
There is a difference between the notation we are following and the one of Cautis, Kamnitzer, and Licata \cite{ckl}.
The notation of Cautis, Kamnitzer, and Licata \cite{ckl} is set up so that $\textbf{D}(-n+2r)=D_{\mathbb{G}_m}(T^*\Gr(n-r,n))$ instead, so the line bundles appearing there are slightly different.  Although Cautis, Kamnitzer, and Licata \cite[Theorem $2.5$]{ckl} state theorems over the field $\textbf{k}= \mathbb{C}$, but the proof works equally well when $\textbf{k}$ is an algebraically closed field with characteristic $p \gg 0$. \end{remark}

Recall from Section \ref{richebackground} that $\text{Mod}^{\text{fg}}_{\mu,0} (\textbf{U}\fg)$ admits a Koszul grading, denoted by $\text{Mod}^{\text{fg, gr}}_{\mu,0} (\textbf{U}\fg)$. The forgetful functor is denoted by $F:\text{Mod}^{\text{fg, gr}}_{\mu,0} (\textbf{U}\fg)\to \text{Mod}^{\text{fg}}_{\mu,0} (\textbf{U}\fg)$. Our main result of this section is as follows; the equivalences needed are essentially those constructed by Riche up to twisting by a line bundle.

\begin{thmbis}{IntrB} \label{theoremb} \begin{enumerate}
\item 
On the categories $\oplus_{r=0}^n\text{Mod}^{\text{fg, gr}}_{\mu_{-n+2r},0} (\textbf{U}\fg)$, there are functors $\textbf{E}_{-n+2r+1}$ and $\textbf{F}_{-n+2r+1}$ together with an octuple of natural transforms $(\eta_1,\eta_2,\epsilon_1,\epsilon_2, \iota, \pi, \hat{T}(r),\hat{X}(r))$, satisfying conditions of a strong $\fs\fl_2$-categorification. 
\item Moreover, the forgetful functor \[F:\oplus_{r=0}^n\text{Mod}^{\text{fg, gr}}_{\mu_{-n+2r},0} (\textbf{U}\fg)\to \oplus_{r=0}^n\text{Mod}^{\text{fg}}_{\mu_{-n+2r},0} (\textbf{U}\fg)\] intertwine this categorification and the one from 
 in Theorem \ref{theorema}.
\item There exist equivalences $\Gamma_r: D^b(\text{Mod}^{\text{fg, gr}}_{\mu_r,0} (\textbf{U}\fg)) \simeq D^b(\text{Coh}_{\Gm}(T^* \Gr(r,n))$ of graded triangulated categories, which intertwines the functors $\textbf{E}(-n+2r+1)$ and $\textbf{F}(-n+2r+1)$ from Theorem \ref{thm:CKL} and the functors $\textbf{E}_{-n+2r+1}$ and $\textbf{F}_{-n+2r+1}$ aforementioned in (1).  \end{enumerate}  
\end{thmbis}
Note that (1) follows directly from (3) thanks to Theorem~\ref{thm:CKL}, which not only proved the functorial relations but also gives the octuple of natural transforms from the geometric setting.

In the rest of this section, we prove Theorem~\ref{theoremb}.

\subsection{Linear Koszul duality on Grassmannians}\label{subsubsec:Koszul}

Let us now revisit Theorem \ref{theoremb}, and see what happens to the graded lifts of the translation functors between the categories $\text{Mod}^{\text{fg, gr}}_{\mu_r, 0}(U \mathfrak{g})$ after transporting them across Riche's localization equivalence \ref{localization}. We will find that, after doing this, one obtains a variant of the categorification constructed by Cautis, Kamnitzer, and Licata; the resulting statement will then follow from their results.

First, we will transport the graded translation functors to the right-hand side of the first equivalence (here $\mathcal{P}_r$ is the parabolic corresponding to the weight $\mu_r$): 
\begin{equation}
\label{eqn:rich}
D^b \text{Mod}^{\text{fg, gr}}_{\mu_r, 0}(\textbf{U} \mathfrak{g}) \simeq \text{DGCoh}^{\text{gr}}(\widetilde{\mathfrak{g}}_{\mathcal{P}_r} \times_{\mathfrak{g}} \{0\})
\end{equation}

Define also the parabolic subgroup $P=P_r\cap P_{r+1}$ and the corresponding partial flag variety $\calP=Fl(r,r+1,n)$. Let $\mu_{r, r+1}$ be the singular weight corresponding to it. We have the following maps:
\begin{align*} 
\widetilde{\mathfrak{g}}_{\mathcal{P}_r}  \overset{b_1}{\leftarrow} (\widetilde{\mathfrak{g}}_{\mathcal{P}_r} \times_{\mathcal{P}_r} \mathcal{P})  \overset{a_1}{\leftarrow} \widetilde{\mathfrak{g}}_{\mathcal{P}}  \overset{a_2}{\rightarrow} (\widetilde{\mathfrak{g}}_{\mathcal{P}_{r+1}} \times_{\mathcal{P}_{r+1}} \mathcal{P})  \overset{b_2}{\rightarrow} \widetilde{\mathfrak{g}}_{\mathcal{P}_{r+1}} 
\end{align*}
Abusing notation, we will use the same symbols to denote the corresponding maps after applying the base change $- \times_\fg\{0\}$ to both sides; note that $b_1 \circ a_1 = \widetilde{\pi}_{\mathcal{P}}^{\mathcal{P}_{r+1}}, b_2 \circ a_2 = \widetilde{\pi}_{\mathcal{P}}^{\mathcal{P}_r}$. 
\begin{lemma}
$T_{\mu_r}^{\mu_{r+1}} = T_{\mu_r}^{\mu} T_{\mu}^{\mu_{r+1}}$ and $T_{\mu_{r+1}}^{\mu_r} = T_{\mu_{r+1}}^{\mu} T_{\mu}^{\mu_r}$. 
\end{lemma}
\begin{proof}
This follows from the same proof as \cite[Proposition~2.2.6]{bmr2}.
\end{proof}
Using Lemma~\ref{geomtrans}, under the equivalence \eqref{eqn:rich} the ``graded lifts of translation functors'' mentioned in Theorem~\ref{theoremb} becomes the following.
\begin{align} \label{eqn:Riche_EF} 
\mathfrak{E}(-n+2r+1): \text{DGCoh}^{\text{gr}}(\widetilde{\mathfrak{g}}_{\mathcal{P}_r} \times_{\mathfrak{g}} \{ 0 \}) &\rightarrow \text{DGCoh}^{\text{gr}}(\widetilde{\mathfrak{g}}_{\mathcal{P}_{r+1}} \times_{\mathfrak{g}} \{ 0 \}) \\ \mathfrak{E}(-n+2r+1) &= b_{2*}a_{2*}a_1^* b_1^* \{-(n-r-1)\}; \notag\\ \mathfrak{F}(-n+2r+1): \text{DGCoh}^{\text{gr}}(\widetilde{\mathfrak{g}}_{\mathcal{P}_{r+1}} \times_{\mathfrak{g}} \{ 0 \}) &\rightarrow \text{DGCoh}^{\text{gr}}(\widetilde{\mathfrak{g}}_{\mathcal{P}_r} \times_{\mathfrak{g}} \{ 0 \}),  \\ \mathfrak{F}(-n+2r+1) &= b_{1*}a_{1*}a_2^* b_2^*\{-r\}\notag. 
\end{align}
Note that we inserted an artificial shifting by $\{ -(n-r-1) \}$ and $\{ -r \}$ (without which Theorem \ref{theoremb} would not be true as stated). Cautis and Koppensteiner \cite{cautis} and Cautis and Kamnitzer \cite{ck16} use these functors in a more general set-up of categorical loop $\mathfrak{sl}_n$-actions; see Remark \ref{refine} for more details. 

Secondly, we calculate the images of the functors $\fE(-n+2r+1)$ and $\fF(-n+2r+1)$ under the Koszul duality maps from Lemma \ref{Lem:Koszul}: $$ \text{DGCoh}^{\text{gr}}(\widetilde{\mathfrak{g}}_{\mathcal{P}_r} \times_{\mathfrak{g}} \{ 0 \}) \simeq \text{DGCoh}^{\gr}(T^*\calP_r) $$ Note that $W$, the Lagrangian correspondence (see Section \ref{statementB}), is the intersection of $T^*{\mathcal{P}_r} \times_{\mathcal{P}_r} \mathcal{P}$ and $T^*{\mathcal{P}_{r+1}} \times_{\mathcal{P}_{r+1}} \mathcal{P}$ inside the ambient space $T^* \mathcal{P}$. We have the following maps. 
\begin{align*} 
T^*\mathcal{P}_r  \overset{\beta_1}{\leftarrow}  T^*{\mathcal{P}_r} \times_{\mathcal{P}_r} \mathcal{P}  \overset{\alpha_1}{\rightarrow} &T^*{\mathcal{P}}  \overset{\alpha_2}{\leftarrow} T^*{\mathcal{P}_{r+1}} \times_{\mathcal{P}_{r+1}} \mathcal{P} \overset{\beta_2}{\rightarrow} T^*{\mathcal{P}_{r+1}}, \\ T^*{\mathcal{P}_r} \times_{\mathcal{P}_r} \mathcal{P} \overset{\gamma_1}{\leftarrow} W \overset{\gamma_2}{\rightarrow} T^*{\mathcal{P}_{r+1}} & \times_{\mathcal{P}_{r+1}} \mathcal{P}, \qquad T^*{\mathcal{P}_r} \overset{p}{\leftarrow} W \overset{q}{\rightarrow} T^*{\mathcal{P}_{r+1}} .
\end{align*}

Let the tautological flag on $\calP$ be denoted by $0\subseteq \mathcal{V}'\subsetneq \mathcal{V} \subseteq \calO^n$.
The following lemma is the key step in the proof.
\begin{lemma}\label{lem:kappa}
After applying the Koszul duality equivalences of Lemma \ref{Lem:Koszul} to the sources and targets of the functors $\fE(-n+2r+1)$ and $\fF(-n+2r+1)$, they correspond to the following Fourier-Mukai transforms:
\begin{align*}  \fE(-n+2r+1) &= q_* \circ -\otimes \det(V)^{n-r}\det(V')^{-n+r+1}[n-r-1]\angl{n-r-1} \circ p^*:\\
&\text{DGCoh}^{\gr}(T^*\calP_r) \to \text{DGCoh}^{\gr}(T^*\calP_{r+1}) ;\\
\fF(-n+2r+1) &= p_* \circ -\otimes \det(V)^{-r}\det(V')^{r+1} [r] \{ r \} \circ q^*:\\ &\text{DGCoh}^{\gr}(T^*\calP_{r+1}) \to \text{DGCoh}^{\gr}(T^*\calP_r) .
  \end{align*} \end{lemma}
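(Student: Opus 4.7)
The plan is to apply linear Koszul duality (the equivalence $\kappa$ of Lemma~\ref{Lem:Koszul}) term-by-term to the four morphisms $b_1, a_1, a_2, b_2$ appearing in the definition of $\fE(-n+2r+1)$ and $\fF(-n+2r+1)$. The key technical input is the functoriality of $\kappa$: for a morphism of appropriately compatible pairs of vector bundles, $\kappa$ intertwines the pullback/pushforward functors on one side with the corresponding functors on the Koszul-dual side, modulo a twist by a determinantal line bundle and a shift in cohomological and internal degree controlled by the relative dimension.

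First, I would identify the cotangent-side analogue of every object: $\widetilde{\fg}_{\calP_?}\times_\fg\{0\}$ (for $? \in \{r, r{+}1, \emptyset\}$) corresponds under $\kappa$ to $T^*\calP_?$, and the fiber product $\widetilde{\fg}_{\calP_?}\times_{\calP_?}\calP$ corresponds to $T^*\calP_?\times_{\calP_?}\calP$. The smooth projections $b_1, b_2$ arise from the two-step flag projections $\calP \to \calP_r$ and $\calP \to \calP_{r+1}$, of relative dimensions $n-r-1$ and $r$; the closed embeddings $a_1, a_2$ encode the additional vanishing along $\fu_\calP/\fu_{\calP_?}$, which by the dimension count $\dim\fu_\calP - \dim\fu_{\calP_?}$ have codimensions $n-r-1$ and $r$ respectively. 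Applying the Koszul-duality functoriality to each of the four maps rewrites $b_{2*}a_{2*}a_1^*b_1^*$ and $b_{1*}a_{1*}a_2^*b_2^*$ as corresponding compositions through the diagram of cotangent bundles, each step tensored by a determinantal line bundle and shifted in bidegree.

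Next, I would apply base change along the Cartesian square that realizes $W$ as the intersection of $T^*\calP_r\times_{\calP_r}\calP$ and $T^*\calP_{r+1}\times_{\calP_{r+1}}\calP$ inside $T^*\calP$. This collapses the composition of functors through $T^*\calP$ into a single Fourier--Mukai kernel supported on $W$, namely $q_*(-\otimes \mathcal{L})p^*$, where $\mathcal{L}$ is assembled from the four preceding twists restricted to $W$. At this point the lemma reduces to computing $\mathcal{L}$ and the total shift explicitly.

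The main obstacle will be the careful bookkeeping of the line-bundle twists and the shifts. Each of the four Koszul-duality compatibilities contributes a twist by a power of $\det\mathcal{V}$ or $\det\mathcal{V}'$, expressible in terms of the tautological flag $\mathcal{V}' \subset \mathcal{V} \subset \calO^n$ on $\calP$, together with a homological and internal shift equal to the corresponding relative dimension. Aggregating the four contributions, restricting along the embedding of $W$, and checking that the totals agree with $\det(V)^{n-r}\det(V')^{-n+r+1}[n-r-1]\angl{n-r-1}$ for $\fE$ and with $\det(V)^{-r}\det(V')^{r+1}[r]\{r\}$ for $\fF$, is essentially a tangent-bundle computation on the two-step partial flag variety, and is where the bulk of the work lies.
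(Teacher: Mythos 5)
Your proposal follows essentially the same route as the paper's proof: it applies Riche's compatibility of linear Koszul duality with the bundle maps (dualizing $b_i^*$ to $\beta_i^*$ and the subbundle restrictions $a_i^*$ to the pushforwards $\alpha_{i*}$ up to a determinantal twist and a shift by the relative dimension), then collapses the composition through $T^*\calP$ onto $W$ by the projection formula and base change, and finishes with the tangent-bundle determinant computation on the two-step flag variety. The only bookkeeping point to watch is that only the inclusions $a_i$ contribute twists, and that the internal shift produced by Koszul duality is $\angl{2(n-r-1)}$ (resp.\ $\angl{2r}$), which combines with the shift $\{-(n-r-1)\}$ (resp.\ $\{-r\}$) built into \eqref{eqn:Riche_EF} --- using that $\kappa$ commutes with internal shifts --- to give the stated $\angl{n-r-1}$ (resp.\ $\angl{r}$).
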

  
Note that the Fourier-Mukai transforms are taken in the same sense as in \eqref{eqn:CKL}. More precisely, we consider these Fourier-Mukai kernels as coherent sheaves on the product $T^*\calP_r\times T^*\calP_{r+1}$, which are set-theoretically supported on the subvariety $W$. In particular, the pullback $p^*$ is understood as taking inverse-image to the product $T^*\calP_r\times T^*\calP_{r+1}$; the tensor and pushing forward all take place on the product. The fact that the kernels are supported on $W$ ensures the functors to be of finite cohomological amplitude hence are well-defined on the corresponding categories. See \cite[\S~2.2.1]{ckl} for a detailed explanation. We have the following fact. 
\begin{lemma} \label{lem:derFM}
The Fourier-Mukai transform $\fE(-n+2r+1)$ is equivalent to the functor
$$\beta_{2*} \circ  \alpha_{2}^*\circ \otimes \det(V)^{n-r}\det(V')^{-n+r+1}[n-r-1]\angl{n-r-1} \circ \alpha_{1*}\circ \beta_1^*;$$
The  Fourier-Mukai transform $\fF(-n+2r+1)$  is equivalent to the functor \[\beta_{1*} \circ \alpha_{1}^*\circ \otimes \det(V)^{-r}\det(V')^{r+1} \angl{r}[r] \circ \alpha_{2*}\circ \beta_2^*.\]
\end{lemma}
\begin{proof}
As $\beta_1$ and $\beta_2$ are smooth proper maps, for simplicity we consider $\fE(-n+2r+1)$ and $\fF(-n+2r+1)$ as Fourier-Mukai transforms between $DG\Coh^{\text{gr}}(T^*{\mathcal{P}_r} \times_{\mathcal{P}_r} \mathcal{P})$ and $DG\Coh^{\text{gr}}(T^*{\mathcal{P}_{r+1}} \times_{\mathcal{P}_{r+1}} \mathcal{P})$, keeping in mind that $p = \beta_1 \circ \gamma_1$ and $q = \beta_2 \circ \gamma_2$.

Let $DG\Coh^{\text{gr}}(W)$ be the derived category of $\Gm$-equivariant dg-modules on $W$, endowed with the structure of derived fiber product of $\alpha_1$ and $\alpha_2$. Then, the standard base change of derived schemes \cite[Proposition 3.7.1]{BR} implies that $\alpha_{2}^*\circ \alpha_{1*}=\gamma_{2*}\circ\gamma_1^*$. 
Noting that  $\det(V)^{n-r}\det(V')^{-n+r+1}\angl{n-r-1}[n-r-1]$  is pulled back from $\calP$, by projection formula and base change \cite[\S~3.8]{BR}, we have: 
\begin{align*} \alpha_{2}^*\circ \otimes \det(V)^{n-r}\det(V')^{-n+r+1}\angl{n-r-1}[n-r-1] \circ \alpha_{1*}&= \\ \gamma_{2*}\circ\otimes \det(V)^{n-r}\det(V')^{-n+r+1}&\angl{n-r-1}[n-r-1] \circ\gamma_1^*. \end{align*}

Now to prove the statement about $\fE(-n+2r+1)$, we only need to show that the Fourier-Mukai transform in the derived sense is the same as the transform in the sense described in the paragraph before this lemma. This is standard, see e.g., \cite[Proposition~4.2.1 and Remark~4.2.2]{BR}. More precisely, the kernel $\det(V)^{n-r}\det(V')^{-n+r+1}\angl{n-r-1}[n-r-1]$ as a locally free dg-module on $W$ has only finitely many cohomology all of which are coherent sheaves on $W$, therefore  \cite[Remark~4.2.2]{BR}  as a functor $DG\Coh^{\text{gr}}(T^*{\mathcal{P}_r} \times_{\mathcal{P}_r} \mathcal{P})\to DG\Coh^{\text{gr}}(T^*{\mathcal{P}_{r+1}} \times_{\mathcal{P}_{r+1}} \mathcal{P})$ it is equal to its image in  $D^b\Coh^{\text{gr}}_W((T^*{\mathcal{P}_r} \times_{\mathcal{P}_r} \mathcal{P})\times (T^*{\mathcal{P}_{r+1}} \times_{\mathcal{P}_{r+1}} \mathcal{P}))$, which is the Fourier-Mukai transform in the sense considered here.

Similarly for the statement about $\fF(-n+2r+1)$. 
\end{proof}
  
\begin{proof}[Proof of Lemma~\ref{lem:kappa}]
By \cite[Proposition~2.4.5]{riche}, $b_i^*$ is Koszul dual to $\beta_i^*$ for $i=1,2$. Let us calculate the Koszul dual to $a_1^*$. On $\calP$, we have two vector bundles, $F_1=\calP\times_{\calP_r}T^*\calP_r$ and $F_2=T^*\calP$. 
The natural embedding $F_1\inj F_2$ is $\alpha_1$.
Here $a_1^*$ is Koszul dual \cite[Proposition~4.5.2]{riche} to the functor $$\alpha_{1*}\circ \det(F_1)^{-1}\det(F_2)[n_2-n_1]\angl{2(n_2-n_1)}$$ The tangent bundle of $\calP_r$ is  $\Hom(V',{\textbf{k}}^n/V')$. Hence, $\det(T^*\calP_r)=\det(\Hom(\textbf{k}^n/V',V'))$. Recall that: $$ \text{det}(\Hom( \mathcal{V}, \mathcal{W}))= \text{det}(\mathcal{V})^{-\text{rk}(W)} \text{det}(\mathcal{W}) ^{\text{rk}(V)} $$ Hence, $\det(F_1)=\det(\textbf{k}^n/V')^{-r}\det(V')^{n-r}$, and $n_1=r(n-r)$. Similarly, the tangent bundle of $\calP$ is filtered by $\Hom(V',V/V')$, $\Hom(V',\textbf{k}^n/V)$, and $\Hom(V/V',\textbf{k}^n/V)$, hence its determinantal bundle is the tensor product of that of all the three subquotients. Therefore, $\det(F_2)=\det(\textbf{k}^n/V)^{-1}\det(V/V')^{n-(r+1)}$ and $n_2=r(n-r)+n-r-1$. Therefore, $\det(F_2)\det(F_1)^{-1}=\det(V)^{n-r}\det(V')^{-n+r+1}$, and $n_2-n_1=n-r-1$. Putting this together, the functor $b_{2*}a_{2*}a_1^* b_1^*$ corresponds to the following map, after applying the Koszul duality equivalence from Lemma \ref{Lem:Koszul}: $$ \beta_{2*} \circ  \alpha_{2}^*\circ \otimes \det(V)^{n-r}\det(V')^{-n+r+1}[n-r-1]\angl{2(n-r-1)} \circ \alpha_{1*}\circ \beta_1^*.$$

Since the Koszul duality $\kappa$ in Lemma~\ref{Lem:Koszul} commutes with internal shifts  \cite[Remark~11.10]{Rich2}, the functor $\mathfrak{E}$ corresponds to the following map: $$\beta_{2*} \circ  \alpha_{2}^*\circ \otimes \det(V)^{n-r}\det(V')^{-n+r+1}[n-r-1]\angl{n-r-1} \circ \alpha_{1*}\circ \beta_1^*.$$
Using Lemma~\ref{lem:derFM}, the stated formula for $\fE$ follows.

The corresponding statement for the functor $\mathfrak{F}$ is proven analogously. On $\calP$, there are two vector bundles $F_1'=\calP\times_{\calP_{r+1}}T^*\calP_{r+1}$ and $F_2'=F_2=T^*\calP$. The natural embedding $F_1'\inj F_2'$ is denoted by $\alpha'$. We have $\det(F_1')=\det(\textbf{k}^n/V)^{-(r+1)}\det(V)^{n-(r+1)}$ and $\det(F_2')=\det(\textbf{k}^n/V)^{-1}\det(V/V')^{n-(r+1)}$. It follows that $\det(F_2')\det(F_1')^{-1}=\det(V)^{-r}\det(V')^{r+1}$, and $n'_2-n_1'=r$. Putting this together, it follows that the map $b_{1*}a_{1*}a_2^* b_2^*$ corresponds to the following map, under the Koszul duality equivalence from Lemma \ref{Lem:Koszul}:
$$ \beta_{1*} \circ \alpha_{1}^*\circ \otimes \det(V)^{-r}\det(V')^{r+1} [r] \{ 2r \} \circ \alpha_{2*} \circ \beta_2^*. $$
The stated formula for $\fF$ now follows analogously from Lemma~\ref{lem:derFM}, keeping in mind that Koszul duality $\kappa$ commutes with internal shifts. 
\end{proof} 

Recall from Section \ref{richebackground} the equivalences 
$$\xi_i: \text{DGCoh}^{\gr}(T^*\calP_i)\cong D^b \Coh_{\Gm}(T^*\calP_i)$$ 
$0\leq i\leq n$, induced by the regrading sending $\calM_q^p$ to $\calM_q^{p-q}$ (see, e.g., \cite[(1.1.2)]{Rich2}). 
The below lemma follows from the definitions, keeping in mind the descriptions of $\mathfrak{E}(-n+2r+1), \mathfrak{F}(-n+2r+1)$ given in Lemma \ref{lem:kappa}. See the below diagram for an illustration. 
\begin{lemma} \label{lem:xi} 
Under the equivalences $\xi_i$ for $i=r,r+1$, the functors $\mathfrak{E}(-n+2r+1)$ and $\mathfrak{F}(-n+2r+1)$ correspond to the Fourier-Mukai transforms between the $D^b\Coh_{\Gm}(T^*\calP_r)$'s: 
\begin{align*} 
\mathfrak{E}(-n+2r+1)&=q_* \circ \otimes \det(V)^{n-r}\det(V')^{-n+r+1}\angl{n-r-1} \circ p^*\\ \mathfrak{F}(-n+2r+1) &= p_* \circ \otimes \det(V)^{-r}\det(V')^{r+1} \angl{r} \circ q^*. 
\end{align*} 
\end{lemma}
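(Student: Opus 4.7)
The plan is to deduce this lemma directly from Lemma~\ref{lem:kappa} via the regrading equivalences $\xi_i$, so the proof is essentially a bookkeeping exercise. The only thing to verify is that, when the formulas from Lemma~\ref{lem:kappa} are transported through $\xi_i$, each occurrence of the combined shift $[a]\angl{a}$ collapses to just $\angl{a}$, while all other pieces pass through unchanged.

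First I would record two general compatibility facts about the regrading $\xi\colon \text{DGCoh}^{\gr}(T^*\calP)\to D^b\Coh_{\bbG_m}(T^*\calP)$ that sends $\calM_q^p$ to $\calM_q^{p-q}$. (i) Pullback and pushforward along a $\bbG_m$-equivariant morphism commute with $\xi$, because the regrading is a purely formal rebookkeeping of the bigrading on the underlying complex and these geometric functors respect the bigrading; in particular this handles $p^*, p_*, q^*, q_*$. (ii) Tensoring by a $\bbG_m$-equivariant line bundle pulled back from the base, viewed as sitting in bidegree $(0,0)$, commutes with $\xi$, since the tensor product operates termwise on the bigrading. Both facts are immediate from the definition of $\xi$ given in \cite[(1.1.2)]{Rich2}.

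Second, I would analyze the effect of $\xi$ on a combined shift of the form $[a]\angl{a}$. Applying $\xi\calM_q^p=\calM_q^{p-q}$ directly yields $\xi(\calM[a]\angl{a})\cong \xi(\calM)\angl{a}$, because an object sitting in bidegree $(a,a)$ on the DGCoh side is sent to an object in bidegree $(0,a)$ on the equivariant coherent side. In other words, the regrading absorbs a cohomological shift whenever it matches the internal shift; this is the essential mechanism converting the formulas in Lemma~\ref{lem:kappa} into those claimed here.

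Finally, I would assemble the three facts. For $\mathfrak{E}(-n+2r+1)$, the functors $p^*$ and $q_*$ commute with $\xi$ by (i), and tensoring with the line bundle $\det(V)^{n-r}\det(V')^{-n+r+1}$ (placed in bidegree $(0,0)$) commutes with $\xi$ by (ii); the only nontrivial interaction is with the combined shift $[n-r-1]\angl{n-r-1}$, which collapses to $\angl{n-r-1}$ by (iii). The identical argument with $a=r$ handles $\mathfrak{F}(-n+2r+1)$. The main (and essentially only) potential obstacle is pinning down the sign/direction conventions for internal and cohomological shifts consistently with those of \cite{riche, Rich2}, so that $[a]\angl{a}$ is absorbed in the correct direction rather than giving, say, $[2a]\angl{a}$; once this convention is fixed, the verification is purely formal.
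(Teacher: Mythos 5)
Your proposal is correct and matches the paper's intent: the paper simply asserts that the lemma "follows from the definitions" given the formulas of Lemma~\ref{lem:kappa}, and your three observations (compatibility of $\xi$ with push/pull and with tensoring by bidegree-$(0,0)$ line bundles, plus the collapse of $[a]\angl{a}$ to $\angl{a}$ under the regrading $\calM_q^p \mapsto \calM_q^{p-q}$) are exactly the bookkeeping that justifies it. Spelling out point (iii) is, if anything, more explicit than the paper itself.
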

\[\xymatrix{
\text{DGCoh}^{\gr}(T^*\calP_r)\ar[r]\ar[d]^{\xi_r}& \text{DGCoh}^{\gr}(T^*\calP_{r+1})\ar[d]^{\xi_{r+1}}\\
D^b\Coh_{\Gm}(T^*\calP_r)\ar[r]&D^b\Coh_{\Gm}(T^*\calP_{r+1});
} \]

\[\xymatrix{
\text{DGCoh}^{\gr}(T^*\calP_{r+1})\ar[r]\ar[d]^{\xi_{r+1}}& \text{DGCoh}^{\gr}(T^*\calP_r)\ar[d]^{\xi_r}\\
D^b\Coh_{\Gm}(T^*\calP_{r+1})\ar[r]&D^b\Coh_{\Gm}(T^*\calP_r).
}\] 

\subsection{Equivalence to a geometric categorification}
The following lemma is straightforward, whose proof we leave to the reader. 
\begin{lemma}\label{lem:Theta} 
Let $\mathcal{D}(r)$ with $r\in \bbZ$ be categories, and $E(r): \mathcal{D}(r-1) \rightarrow \mathcal{D}(r+1), F(r): \mathcal{D}(r+1) \rightarrow \mathcal{D}(r-1)$ be functors. 
Suppose that we have an octuple of natural transforms $(\eta_1,\eta_2,\epsilon_1,\epsilon_2, \iota, \pi, \hat{T}(r),\hat{X}(r))$ satisfying conditions of a strong  $\mathfrak{sl}_2$-categorification. Assume furthermore that we have functors $\overline{E}(r), \overline{F}(r)$ and  automorphism $\Theta_r$ of the category $\mathcal{D}(r)$ for each $r$ such that $$ \overline{E}(r) = \Theta_{r+1} \circ E(r) \circ \Theta_{r-1}^{-1}, \overline{F}(r) = \Theta_{r-1} \circ F(r) \circ \Theta_{r+1}^{-1}.$$ Then, the functors $\overline{E}(r)$ and $\overline{F}(r)$ satisfies functorial relations of a weak $\mathfrak{sl}_2$-categorification; moreover, there is an octuple of natural transforms $(\eta_1',\eta_2',\epsilon_1',\epsilon_2', \iota', \pi', \hat{T}'(r),\hat{X}'(r))$ satisfying the conditions of a strong  $\mathfrak{sl}_2$-categorification. 
\end{lemma} 

We obtain the following. \begin{lemma} \label{lem:Theta_Grass}
Define $\Theta_r:D^b\Coh_{\Gm}(T^*\Gr(r,N))\to D^b\Coh_{\Gm}(T^*\Gr(r,N))$ to be $\otimes \det(\calV_r)^{r}$ where $\calV_r$ is the tautological subbundle on $\Gr(r,N)$. Then, the $\Theta_r$'s intertwine the pairs of functors $\{ \textbf{E}(-n+2r+1), \textbf{F}(-n+2r+1) \}$ and $\{ \fE(-n+2r+1), \fF(-n+2r+1) \}$.
\end{lemma}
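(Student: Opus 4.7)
The plan is to verify the two intertwining relations directly by comparing the line-bundle twists appearing in the Fourier--Mukai kernels on both sides. The key observation is that both pairs $\{\textbf{E}(-n+2r+1), \textbf{F}(-n+2r+1)\}$ (from \eqref{eqn:CKL}) and $\{\mathfrak{E}(-n+2r+1), \mathfrak{F}(-n+2r+1)\}$ (from Lemma \ref{lem:xi}) are Fourier--Mukai transforms supported on the \emph{same} correspondence $T^*\calP_r \xleftarrow{p} W \xrightarrow{q} T^*\calP_{r+1}$, and each $\Theta_r$ is tensoring by a line bundle pulled back from $T^*\calP_r$. Hence the intertwining $\mathfrak{E}(-n+2r+1) \cong \Theta_{r+1} \circ \textbf{E}(-n+2r+1) \circ \Theta_r^{-1}$ (and similarly for $\mathfrak{F}$) reduces, via the projection formula, to an equality of line bundles on $W$.

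The one subtlety to watch out for is that \eqref{eqn:CKL} and Lemma \ref{lem:xi} use \emph{opposite} conventions for labelling the tautological flag on $W$. In \eqref{eqn:CKL} we have $\dim V = r$ and $\dim V' = r+1$, whereas in the proof of Lemma \ref{Lem:Koszul} (from which the formulas in Lemma \ref{lem:xi} are inherited) the tautological flag on $\calP = Fl(r, r+1, n)$ is denoted $\mathcal{V}' \subsetneq \mathcal{V}$ with $\dim \mathcal{V}' = r$ and $\dim \mathcal{V} = r+1$. Before doing anything else, I would rewrite Lemma \ref{lem:xi} in the convention of \eqref{eqn:CKL}, obtaining $\mathfrak{E}(-n+2r+1) = q_* \circ (-) \otimes \det(V')^{n-r}\det(V)^{-(n-r-1)}\angl{n-r-1} \circ p^*$ and $\mathfrak{F}(-n+2r+1) = p_* \circ (-) \otimes \det(V')^{-r}\det(V)^{r+1}\angl{r} \circ q^*$. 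This is the only step where a sign error is likely.

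Using $p^*\calV_r = V$ and $q^*\calV_{r+1} = V'$ on $W$, the projection formula lets $\Theta_r^{-1}$ (which contributes $\det(V)^{-r}$ after $p^*$) and $\Theta_{r+1}$ (which contributes $\det(V')^{r+1}$ inside $q_*$) be absorbed into the kernel of $\textbf{E}(-n+2r+1)$. Collecting powers gives $V'$-exponent $(n-2r-1)+(r+1) = n-r$ and $V$-exponent $-(n-2r-1)-r = -(n-r-1)$, matching $\mathfrak{E}(-n+2r+1)$ on the nose. The computation for $\mathfrak{F}$ is analogous and yields $V$-exponent $1+r = r+1$ and $V'$-exponent $1-(r+1) = -r$, matching $\mathfrak{F}(-n+2r+1)$. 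The entire argument is an exponent calculation; the only real obstacle is keeping the two flag-bundle conventions straight.
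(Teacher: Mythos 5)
Your proposal is correct and is essentially the paper's argument: the paper conjugates the CKL functors by the $\Theta_r$'s (via Lemma \ref{lem:Theta}) and leaves the identification of $\Theta_{r+1}\circ \textbf{E}\circ\Theta_r^{-1}$ and $\Theta_{r-1}\circ \textbf{F}\circ\Theta_{r+1}^{-1}$ with $\fE$, $\fF$ as an easy verification, which is exactly the projection-formula exponent count you carry out on $W$ (including correctly resolving the opposite $V$/$V'$ labelling conventions between \eqref{eqn:CKL} and Lemma \ref{lem:xi}). Your exponents $(n-2r-1)+(r+1)=n-r$, $-(n-2r-1)-r=-(n-r-1)$, $1+r$, $1-(r+1)$ match the stated kernels, so the check goes through as claimed.
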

\begin{proof}
In Lemma~\ref{lem:Theta}, take the categorification $\{E_r,F_r\}$ to be the CKL as in \S~4.2. One easily verifies that $\{\overline E_r,\overline F_r\}$ are precisely $\fE(-n+2r+1), \fF(-n+2r+1)$. To see $\Theta_r$'s are induced by Fourier-Mukai transforms, note that they are induced by kernels the same line bundles on the diagonals $T^*\Gr(r,N)\inj T^*\Gr(r,N)\times T^*\Gr(r,N)$. \end{proof}

Summarizing the results of the present section, we obtain  the following. 
\begin{prop}\label{prop:summar} The categories $D(-n+2r) = D^b \text{Coh}_{\Gm}(T^* \mathcal{P}_r)$, the Fourier-Mukai transforms $\fE(-n+2r+1):D(-n+2r)\to D(-n+2r+1)$, $\fF(-n+2r+1):D(-n+2r+1)\to D(-n+2r)$, along with suitable morphisms $i, \pi, \epsilon, \eta, \hat X, \hat T$, constitute a strong categorical  $\mathfrak{sl}_2$-action. \end{prop}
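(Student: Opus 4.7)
The plan is to deduce the proposition as a formal consequence of Theorem~\ref{thm:CKL} (the strong categorical $\mathfrak{sl}_2$-action of Cautis--Kamnitzer--Licata on the categories $D(-n+2r)$ with kernels $\textbf{E}(-n+2r+1)$, $\textbf{F}(-n+2r+1)$), combined with the conjugation principle of Lemma~\ref{lem:Theta} and the identification provided by Lemma~\ref{lem:Theta_Grass}.

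The first step is to take as input the strong categorical $\mathfrak{sl}_2$-action produced in Theorem~\ref{thm:CKL}: this gives, on the categories $D(-n+2r)$, the functors $\textbf{E}(-n+2r+1)$, $\textbf{F}(-n+2r+1)$, the divided-power functors, and the natural transformations $i$, $\pi$, $\epsilon$, $\eta$, $\hat X$, $\hat T$ satisfying the compatibilities of Definition~\ref{2LieAlgebra}. The second step is to apply Lemma~\ref{lem:Theta} with $\Theta_r = {-} \otimes \det(\mathcal{V}_r)^{r}$ (which is a Fourier-Mukai autoequivalence of $D(-n+2r)$, being given by a line bundle supported on the diagonal). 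That lemma guarantees that conjugating the CKL functors by the family $\{\Theta_r\}$ produces a new strong categorical $\mathfrak{sl}_2$-action on the same collection of categories, with the adjunction units/counits, $\iota$, $\pi$, $\hat X$, $\hat T$ transported tautologically through the equivalences $\Theta_r$.

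The third step is to identify the conjugated functors: Lemma~\ref{lem:Theta_Grass} asserts precisely that $\Theta_{r+1} \circ \textbf{E}(-n+2r+1) \circ \Theta_r^{-1} \simeq \fE(-n+2r+1)$ and $\Theta_r \circ \textbf{F}(-n+2r+1) \circ \Theta_{r+1}^{-1} \simeq \fF(-n+2r+1)$. Combining these three ingredients yields the strong categorical $\mathfrak{sl}_2$-action on $\{D(-n+2r)\}$ with the functors $\fE(-n+2r+1)$, $\fF(-n+2r+1)$, and with morphisms $i$, $\pi$, $\epsilon$, $\eta$, $\hat X$, $\hat T$ obtained by conjugating the CKL morphisms with the appropriate $\Theta_r$'s.

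The main obstacle is essentially bookkeeping rather than mathematical: one must ensure that Lemma~\ref{lem:Theta} is stated (and proved) at a level of generality that covers all the auxiliary data in Definition~\ref{2LieAlgebra}, not merely the functors $E(r)$ and $F(r)$, so that the divided-power functors $E^{(k)}(r)$, $F^{(k)}(r)$ and the morphisms $\iota$, $\pi$ are also transported consistently. Since the $\Theta_r$'s are autoequivalences given by tensoring with line bundles on diagonals, conjugation commutes with composition, takes left (respectively right) adjoints to left (respectively right) adjoints, and preserves Fourier-Mukai-induced natural transformations; this makes the verification of each compatibility in Definition~\ref{2LieAlgebra} automatic once it is established for the CKL data. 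Thus no new geometric input is needed beyond what is already in Theorem~\ref{thm:CKL} and the two conjugation lemmas.
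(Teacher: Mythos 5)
Your proposal is correct and follows the same route as the paper: the paper's own proof of Proposition~\ref{prop:summar} is exactly the combination of Theorem~\ref{thm:CKL} with the conjugation principle of Lemma~\ref{lem:Theta} applied via the autoequivalences $\Theta_r$ of Lemma~\ref{lem:Theta_Grass}. Your additional bookkeeping remark about transporting the divided powers and all auxiliary data through the $\Theta_r$'s is a fair observation but does not change the argument.
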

\begin{proof}
This follows from Lemmas~\ref{lem:Theta} and \ref{lem:Theta_Grass}, along with Theorem~\ref{thm:CKL}\cite{ckl}. \end{proof}

\begin{remark} \label{xt} There exist morphisms between  Fourier-Mukai kernels $\hat X(r)$ and $\hat T$ \cite[Theorem~5.1]{ckl}. Moreover, the choice of $\hat X$ and $\hat T$ is parametrized by a certain space $V(1)^{tr} \times V(2)^{tr} \times \textbf{k}^{\times}$  \cite[p.g.20]{ckl}, while the choice of $i, \pi, \epsilon, \eta$ is unique up to scaling by $\textbf{k}^{\times}$.  
\end{remark} 
\subsection{Concluding remarks}\label{subsec:compar}

To summarize the precise relation between the categorification via modular representations $\text{Mod}_{0, \mu}(\textbf{U} \mathfrak{g})$ and the graded translation functors from Lemma~\ref{geomtrans}, 
and the  categorification from Proposition~\ref{prop:summar}, we have the following diagram
\[\xymatrix{
D^b\text{Mod}^{\text{gr}}_{0, \mu}(\textbf{U} \mathfrak{g}) \ar[d]^{Forg}\ar[r]^{\tilde\gamma_\calP}_{\cong}& \text{DGCoh}^{\gr}(\widetilde{\fg}_{\calP}\times_{\fg}\{0\})\ar[d]_{Forg}\ar[r]^{\Theta\circ \xi\circ\kappa}_{\cong}&D^b\Coh_{\Gm}(T^*\calP)\\
D^b\text{Mod}_{0, \mu}(\textbf{U} \mathfrak{g})\ar[r]^{\hat\gamma_\calP}_{\cong}& \text{DGCoh}(\widetilde{\fg}_{\calP}\times_{\fg}\{0\})
}.\]
The equivalence  $\Gamma_r$ claimed in Theorem~\ref{theoremb}(2) is given by the top row of this diagram. 
This finishes the proof of Theorem~\ref{theoremb}. We deduce that: 

\begin{corollary} \label{refinement} When the characteristic of the field is either $0$, or $p > n$, the categorification of Theorem \ref{thm:CKL} \cite{ckl} admits an abelian refinement. \end{corollary}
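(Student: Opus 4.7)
The strategy is to leverage Theorem~\ref{theoremb}(2) directly: the equivalences $\Gamma_r$ provide bridges between the derived categories of coherent sheaves on the cotangent bundles $T^*\textbf{Gr}(r,n)$ and the derived categories $D^b(\text{Mod}^{\text{fg, gr}}_{\mu_r,0}(\textbf{U}\fg))$ of graded modular representations, and the latter categories are by definition derived categories of honest abelian categories. First, I would transport the tautological $t$-structure on $D^b(\text{Mod}^{\text{fg, gr}}_{\mu_r,0}(\textbf{U}\fg))$ across $\Gamma_r$ to obtain a $t$-structure on $D^b\text{Coh}_{\Gm}(T^* \textbf{Gr}(r,n))$. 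By construction, its heart is equivalent to the abelian category $\text{Mod}^{\text{fg, gr}}_{\mu_r,0}(\textbf{U}\fg)$, which furnishes the desired abelian refinement for each weight space.

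Next I would verify that the functors of the Cautis--Kamnitzer--Licata categorification are $t$-exact with respect to these transported $t$-structures. By Theorem~\ref{theoremb}(2), the kernels $\mathcal{E}(-n+2r+1), \mathcal{F}(-n+2r+1)$ correspond under $\Gamma_r, \Gamma_{r+1}$ to the graded lifts of the translation functors $\textbf{E}_{-n+2r+1}, \textbf{F}_{-n+2r+1}$ defined in~\eqref{eqn:trans_func}. These translation functors are evidently exact between the abelian module categories: tensoring with the finite dimensional $\fg$-module $\textbf{k}^n$ (or its dual) is exact, and the projection onto the generalized central character block is the direct summand functor associated with the decomposition of $\text{Rep}(U_0\fg)$ recalled in Section~\ref{modularrep}, hence also exact. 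Exactness lifts automatically to the graded setting through the Koszul grading construction recalled in Section~\ref{subsec:Localization}, since the grading is compatible with the block decomposition and with tensoring by $\textbf{k}^n$.

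Combining these two steps, the functors in the Cautis--Kamnitzer--Licata strong categorical $\fs\fl_2$-action (Theorem~\ref{thm:CKL}) preserve the hearts of the transported $t$-structures, and so restrict to exact functors between the abelian categories $\text{Mod}^{\text{fg, gr}}_{\mu_r,0}(\textbf{U}\fg)$. The natural transformations $X$ and $T$ from Theorem~\ref{theorema} (and their graded lifts implicit in the proof of Theorem~\ref{theoremb}) then endow these abelian categories, together with the restricted functors, with the structure of an $\fs\fl_2$-categorification in the sense of Definition~\ref{strongsl2}, hence an abelian refinement of the derived geometric categorification. In characteristic zero, one either appeals to the analogous localization statement over $\bbC$, or argues by a standard spreading-out/specialization of the positive characteristic result for $p \gg 0$.

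The steps above are essentially formal given Theorem~\ref{theoremb}; the one subtle point that I would want to double-check is that the regrading equivalence $\xi$ and the Koszul duality $\kappa$ used to construct $\Gamma_r$ really do intertwine the standard (module-theoretic) $t$-structure with a well-defined $t$-structure on $D^b\text{Coh}_{\Gm}(T^*\textbf{Gr}(r,n))$, as opposed to only matching the underlying triangulated categories. This is guaranteed by the compatibility of the Riche localization with the forgetful functor (the commutative square in Section~\ref{subsec:Localization}) together with the fact that the image of the abelian heart under $\Gamma_r$ can be characterized intrinsically as an exotic $t$-structure in the sense of Cautis--Koppensteiner (see \cite{cautis}), to which the discussion in Remark~\ref{refine} refers.
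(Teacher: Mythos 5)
For $p \gg 0$ your argument is essentially the paper's: the corollary is a direct consequence of Theorem~\ref{theoremb}, with the heart of the transported $t$-structure being $\text{Mod}^{\text{fg, gr}}_{\mu_r,0}(\textbf{U}\fg)$ and exactness of the CKL functors coming from exactness of the (graded lifts of the) translation functors. That part is fine.

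The characteristic~$0$ case, however, contains a genuine gap. Your first suggested route --- ``appeal to the analogous localization statement over $\bbC$'' --- is not available: the Riche/Bezrukavnikov--Mirkovi\'c--Rumynin localization used to build $\Gamma_r$ is intrinsically a positive-characteristic phenomenon (it rests on the Frobenius center and the Azumaya property of $\textbf{U}\fg$ over it), and there is no characteristic-zero equivalence identifying $D^b\Coh_{\Gm}(T^*\textbf{Gr}(r,n))$ with the derived category of a block of $\fg$-modules; indeed the paper stresses in the introduction that the characteristic-zero (BFK, category $\mathcal{O}$) categories are \emph{not} equivalent to the CKL coherent categories. Your second route, ``standard spreading-out/specialization,'' is asserted but not an argument: passing exactness of a $t$-structure from $p \gg 0$ down to characteristic $0$ requires knowing that the relevant $t$-structures are members of a family defined uniformly over the base and controlled by characteristic-independent data. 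This is exactly what the paper supplies: it invokes the family of exotic $t$-structures on $\text{DGCoh}^{\text{gr}}(\widetilde{\mathfrak{g}}_{\mathcal{P}} \times_{\mathfrak{g}} \{ 0 \})$ indexed by alcoves (Bezrukavnikov--Mirkovi\'c), observes that the alcove diagram does not depend on the characteristic, and concludes that the choice of alcoves for which $\mathfrak{E}(-n+2r+1)$ and $\mathfrak{F}(-n+2r+1)$ are exact when $p \gg 0$ also yields exact functors in characteristic $0$. To repair your proof you would need to replace the final sentence with this (or an equivalent) mechanism; as written, the characteristic-zero claim is unsupported.
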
 

\begin{proof} The statement is a direct corollary of Theorem \ref{theoremb} when $p>n$. When the characteristic is $0$, recall that we have a family of exotic $t$-structures on $\text{DGCoh}^{\text{gr}}(\widetilde{\mathfrak{g}}_{\mathcal{P}} \times_{\mathfrak{g}} \{ 0 \})$ indexed by alcoves (cf. Remark 1.5.4 of \cite{bm}). We wish to choose alcoves such that the functors $\mathfrak{E}(-n+2r+1)$ and $\mathfrak{F}(-n+2r+1)$ are exact with respect to the corresponding t-structures. Since the alcove diagram does not depend on the characteristic (cf. Section 1.8 of \cite{bm}), and such a choice of alcoves exists for $p >n$, we can also pick a suitable family of alcoves when the characteristic is $0$ (cf. Theorem 3.0.2 of \cite{bm}). \end{proof}

\begin{remark} \label{refine} Corollary \ref{refinement} fits into a more general framework developed by Cautis and Koppensteiner \cite{cautis}. Uniqueness and existence properties for abelian refinements of the categorification in \cite{ckl} and its variants are proven there.
Up to shifts, the categorification described by equations (\ref{eqn:Riche_EF}) can be extracted from the more general categorical action of $(L\mathfrak{gl}_n, \theta)$ on $\widetilde{\mathcal{K}}_{\mathfrak{g}}$  \cite[the line before Lemma 9.1]{cautis}, and proven by Cautis and Kamnitzer  \cite{ck16}.  The $\mathfrak{sl}_2$-categorification defined by equations \ref{eqn:Riche_EF} admits an abelian refinement \cite[Corollary 9.2]{cautis} using certain exotic t-structures. From the previous section, we deduce that this categorification is equivalent to the categorification  \cite{ckl} by linear Koszul duality. \end{remark} 

\begin{remark} \label{rmk:3.14}
Theorem \ref{theoremb}, together with Corollary~\ref{cor:2.13} further fit into the general frame work of $D$-equivalence in birational geometry. 
In general it has been conjectured by Bondal and Orlov, and proved in dimension 3 by Bridgeland that flops induce derived equivalences. Categories of perverse coherent sheaves were used in Bridgeland's proof, and the resulting equivalence is perverse in the sense of Chuang and Rouquier \cite{CR2}. Later on, Kawamata and Namikawa proved similar results for Mukai-flops between birational symplectic varieties. They further raised the question on whether stratified Mukai flops, in particular those for complementary Grassmannians, induced derived equivalences. 
In \cite{ckl}, an explicit derived equivalence $D^b\Coh(T^*\Gr(k,N)) \cong D^b\Coh(T^*\Gr(N-k,N))$ was achieved as a consequence of the categorification, answering the question of Kawamata and Namikawa, although the question of perversity of this equivalence was still left open. We note that  combining Theorem \ref{theoremb} and Corollary~\ref{cor:2.13}, we obtain that the derived equivalences induced by stratified Mukai flops for complementary Grassmanians are perverse equivalences.
\end{remark}

\begin{remark}
Theorem \ref{theoremb} implies Theorem \ref{theorema}. Indeed, the categorification from Theorem~\ref{theorema} is induced by Fourier-Mukai kernels  \cite[Proposition~5.4.3]{riche} in $DGCoh(\widetilde\fg_{\calP_r}\times_\fg\widetilde\fg_{\calP_{r+1}}\times_\fg\{0\})$, which admits natural graded lifting given by \eqref{eqn:Riche_EF}.  
From Section~\ref{subsubsec:Koszul}, Theorem \ref{theoremb} can be reformulated in terms of Fourier-Mukai kernels on the spaces $\widetilde\fg_{\calP_r} \times_\fg\widetilde\fg_{\calP_{r+1}}\times_\fg\{0\}$.
The natural transforms between the functors are constructed from morphisms between the corresponding Fourier-Mukai kernels \cite[Theorem~5.1]{ckl} (see also Remark~\ref{xt}). 
The nil-affine Hecke relations amount to certain relations between the morphisms of  Fourier-Mukai kernels. 
In particular,  the necessary relations ({\it i.e.}, those from \ref{strongsl2}) between the functors and morphisms on the graded categories of coherent sheaves are satisfied. 
Therefore, the relations between the functors and natural transforms on the ungraded categories of coherent sheaves also hold. 
Here note that the said relations from Definition \ref{2LieAlgebra} imply those from Definition \ref{strongsl2} \cite[Theorem~3.19]{Rou}. \end{remark} 

\begin{example} When $n=2$, Proposition \ref{thm:CKL} can be described as follows. We are categorifying the action of $U_q(\mathfrak{sl}_2)$ on $V^{\otimes 2}$.  We have the following three categories, categorifying the three weight spaces
\[
D(-2) = D_{\mathbb{G}_m}(T^* \mathbb{G}(0,2)) = D_{\mathbb{G}_m}(\text{pt}),  \ \  D(0)=D_{\mathbb{G}_m}(T^* \mathbb{G}(1,2)) = D_{\mathbb{G}_m}(T^*\mathbb{P}^1),  \ \  D(2)=D_{\mathbb{G}_m}(\text{pt}) \]
Below we describe the functors between them by writing down the corresponding Fourier-Mukai kernel. 
 \begin{align*} 
\mathfrak{E}(-1): D(-2) \rightarrow D(0) &\longleftrightarrow \mathcal{O}(1) \{  1\} \\ \mathfrak{F}(-1): D(0) \rightarrow D(-2) &\longleftrightarrow \mathcal{O}(-1) \\ \qquad \mathfrak{E}(1): D(0) \rightarrow D(2) &\longleftrightarrow \mathcal{O}(-1) \\ \mathfrak{F}(1): D(2) \rightarrow D(0) &\longleftrightarrow \mathcal{O}(1) \{ 1 \} 
\end{align*} 
In this case, $D(-2) = D(2)$, $\mathfrak{E}(-1) \simeq \mathfrak{F}(1)$ and $\mathfrak{E}(1) \simeq \mathfrak{F}(-1)$. 

The categorical relations that these functors satisfy are as follows (see also   \cite[pg. 5]{ckl}). 
\begin{align*} \mathfrak{E}(1) \mathfrak{F}(1) \simeq \mathfrak{F}(-1)&\mathfrak{E}(1) \simeq  [1]\{- 1\} \oplus [-1] \{ 1 \} \\ \mathfrak{E}(-1) \mathfrak{F}(-1) &\simeq \mathfrak{F}(1) \mathfrak{E}(1) \qquad \end{align*} 
The second relation is self-evident. The first relation can be verified  directly using Koszul resolution \cite[\S~5.4]{CG} and cohomology of sheaves on $\bbP^1$ \cite[\S~3.5]{Hart}. 


\end{example}
 
\section{Further directions.} \label{further}

In the sequel \cite{nzh2}, we extend the results of the present paper in two directions. Firstly we construct categorical $\mathfrak{sl}_k$-actions by considering a larger collection of singular blocks of modular representations of $\fs\fl_n$, following Sussan \cite{suss}. In this setting, we generalise Theorem B by showing that these categorifications admit a graded lift which is equivalent to a geometric construction of Cautis, Kamnitzer and Licata. Secondly we consider representation categories with non-zero Frobenius central characters, and use their singular blocks to categorify tensor products of symmetric powers of the standard $\mathfrak{sl}_k$-module. We also show that the geometric construction of categorical symmetric Howe duality by Cautis and Kamnitzer \cite{ck16} can be used to obtain a graded lift of this categorification.

One crucial difference in the positive characteristic setting compared to that of \cite{suss} is that it is necessary to look at categories of representations equipped with a torus grading. More precisely, using the notation from Section \ref{modularrep}, we define the category $(\mathfrak{g}, T)-\text{mod}$ as follows: an object consists of a module $M$ with a grading $M = \oplus_{\nu \in X} M_{\nu}$, such that each root vector $E_{\alpha}$ maps $M_{\nu}$ onto $M_{\nu + \alpha}$ and every $H \in \mathfrak{h}$ acts on $M_{\nu}$ as multiplication by $\nu(H)$ (here $X$ is the group of characters of $T$). This torus grading is essential for the formulation of Lusztig's conjectures  for representations of Lie algebras in positive characteristic; see Section 3 of \cite{fiebig} for a precise formulation. 

\newcommand{\arxiv}[1] {\texttt{\href{http://arxiv.org/abs/#1}{arXiv:#1}}} \newcommand{\doi}[1] {\texttt{\href{http://dx.doi.org/#1}{doi:#1}}}

\end{document}